\documentclass[12pt]{extarticle}
\usepackage{amsmath, amsthm, amssymb, mathtools, hyperref, color}
\usepackage[shortlabels]{enumitem}
\usepackage{graphicx}
\usepackage{adjustbox}
\usepackage{xspace}
\usepackage{cleveref}
\usepackage{tikz}
\usepackage{multirow}
\usepackage{booktabs}
\usepackage{array}
\usepackage{tabularx}
\usetikzlibrary{arrows,calc,fit,matrix,positioning,cd}
\usepackage{float}
\usepackage{natbib}
\usepackage{placeins}

\definecolor{cb-yellow}{RGB}{221,170,51}
\definecolor{cb-red} {RGB}{187,85,102}
\definecolor{cb-green}{RGB}{17,119,51}

\hypersetup{
    pdfauthor={Janike Oldekop},
    pdftitle={Euler Stratifications},
    pdfsubject={},
    pdfkeywords={},
    colorlinks=true,
    linkcolor=cb-red,
    filecolor=cb-red,      
    urlcolor=cb-green,
    citecolor=cb-green,
}

\oddsidemargin \evensidemargin
\newtheorem{theorem}{Theorem}
\newtheorem*{theorem*}{Theorem}
\numberwithin{theorem}{section}
\newtheorem{proposition}[theorem]{Proposition}
\newtheorem{lemma}[theorem]{Lemma}
\newtheorem{corollary}[theorem]{Corollary}
\newtheorem{conjecture}[theorem]{Conjecture}

\newtheorem*{notation*}{Notation}
\newtheorem*{outline*}{Outline}
\newtheorem*{symexchange*}{Symmetric exchange axiom}
\newtheorem*{acknowledgements*}{Acknowledgements}

\theoremstyle{definition}
\newtheorem{definition}[theorem]{Definition}

\newtheorem{example}[theorem]{Example}

\newcommand{\RR}{\mathbb{R}}

\newcommand{\ZZ}{\mathbb{Z}}
\newcommand{\NN}{\mathbb{N}}
\newcommand{\PP}{\mathbb{P}}
\newcommand{\CC}{\mathbb{C}}

\newcommand{\cF}{\mathcal{F}}
\newcommand{\cH}{\mathcal{H}}

\newcommand{\cM}{\mathcal{M}}

\newcommand{\cV}{\mathcal{V}}

\newcommand{\cK}{\mathcal{K}}

\DeclareMathOperator{\mldeg}{MLdeg}

\DeclareMathOperator{\rank}{rank}

\newcolumntype{C}[1]{>{\centering\arraybackslash}p{#1}}

\date{}

\title{\textbf{Euler Stratifications of Second Hypersimplices via Delta-matroids}}
\author{Janike Oldekop}

\begin{document}
\maketitle

\begin{abstract}
    We study Euler characteristics of scaled toric varieties arising from second hypersimplices. In algebraic statistics, these are closely connected to maximum likelihood (ML) degrees of toric models. We establish a correspondence between delta-matroids and the non-vanishing factors of the principal $A$-determinant, providing an explicit connection between delta-matroid theory and algebraic statistics. Using this framework, we show that a conjectured minimum ML degree is realizable by a suitable embedding of the variety. Furthermore, for second hypersimplices up to order six, we prove that this value is minimal among all embeddings, as conjectured by Clarke~et~al.~(2024).
\end{abstract}

\section{Introduction}
\label{sec:ToricModels}

The notion of Euler stratifications arises in both the study of scattering amplitudes in theoretical physics and the investigation of toric models in algebraic statistics~\cite{EulerStratificationsOfHypersurfaceFamilies}. These stratifications are closely connected to \emph{Euler discriminants}, which were first introduced by Esterov in the study of discriminants of polynomial systems~\cite{TheDiscriminantOfASystemOfEquations}. 

Given a family of parameterized quasi-projective varieties, an \emph{Euler stratification} partitions the parameter space into strata on which the Euler characteristic of the corresponding fibers is constant \cite[Definition 2.1]{EulerStratificationsOfHypersurfaceFamilies}. In this article, we study families of scaled toric varieties associated with second hypersimplices as described in the following subsection. 

For $d\in\NN$ with $d\ge2$, the \emph{second hypersimplex of order $d$} is the lattice polytope $$\Delta_{d,2} \coloneqq \{ x\in[0,1]^d \mid x_1+\ldots+x_d=2 \} \subseteq\RR^d.$$ 

From a statistical perspective, these varieties give rise to an important class of discrete statistical models \cite[Chapter 6]{AlgebraicStatistics}. Such models, known as \emph{toric models}, play a central role in likelihood geometry \cite{LikelihoodGeometry}.

\subsection{Likelihood Geometry of Toric Models} \label{subsection:LikelihoodGeometryOfToricModels}

A well-known method in statistics is \emph{maximum likelihood (ML) estimation}. In algebraic statistics \cite{AlgebraicStatistics}, a discrete statistical model is viewed as a subset of the probability simplex $$\Delta_{n-1} \coloneqq \{ (p_1, \ldots, p_n) \in \RR^n \mid p_1 + \ldots + p_n = 1 \textup{ and } p_i \ge 0 \textup{ for all } i \in [n] \},$$ where $[n] \coloneqq \{ 1,\ldots, n \}$. A point in $\Delta_{n-1}$ represents the probability distribution of a discrete random variable with $n$ states. To estimate such a distribution from observed data, suppose we perform a random experiment with $n$ possible outcomes over $N$ independent trials. Accordingly, we consider $N$ independent and identically distributed random variables $X_k$ taking values in $[n]$, with probabilities $\PP(X_k=i) = p_i$. We record the observed data in a vector $u\in\NN^n$, where the $i^{\textup{th}}$ component $u_i$ denotes the number of observations of outcome~$i$. Then $N=u_1+\ldots + u_n$, and the \emph{likelihood function} is the rational function $$L_u(p) \coloneqq \frac{p_1^{u_1} \cdots p_n^{u_n}}{(p_1+\ldots+p_n)^{u_1+\ldots+u_n}}.$$ 

In ML estimation, one seeks to determine the distribution in a given model $\cM\subseteq \Delta_{n-1}$ that maximizes the likelihood function over $\cM$. Such a distribution is the \emph{ML estimate} of the data. It can be determined by computing the critical points of $L_u$ restricted to $\cM$.

A \emph{toric models} is determined by a configuration of $n$ lattice points $a_1, \ldots, a_n \in\ZZ^d$. Let $$A = \begin{pmatrix} a_1 & \ldots & a_n \end{pmatrix} \in\ZZ^{d\times n}.$$ We assume that the all-ones vector lies in the row space of $A$, and that $\ZZ A = \ZZ^d$. For $c\in(\CC^*)^n$, consider the parameterization map $$\psi_A^c : (\CC^*)^d \to \CC\PP^{n-1}, \quad  (\theta_1,\ldots,\theta_d) \mapsto [c_1 \theta^{a_1}:\dots : c_n\theta^{a_n}],$$ where $\theta^{a_j} \coloneqq \theta_1^{a_{1j}} \cdots \theta_d^{a_{dj}}$. The Zariski closure of the image of $\psi_A^c$ is the \emph{scaled toric variety} associated with $A$ and $c$, and is denoted by $V_A^c$. The vector $c$ is called a \emph{scaling}.

\begin{definition}
A \emph{toric model} $\cM_A^c$ is the intersection of a scaled toric variety $V_A^c$ with the open probability simplex. More precisely, $\cM_A^c = \varphi (V_A^c) \cap \Delta_{n-1}^\circ$, where $$\varphi:\CC\PP^{n-1} \to \CC^n, \quad [p_1 : \cdots : p_n] \mapsto \frac{1}{p_1 + \cdots + p_n} (p_1,\ldots,p_n).$$
\end{definition}

Each such model is associated with a polytope, namely the convex hull of the defining points $a_1,\ldots,a_n$. This polytope encodes important information about the model. For instance, its normalized volume equals the degree of the underlying toric variety, which in turn bounds the number of complex critical points of the likelihood function for generic data. A more precise complexity measure for algebraic ML estimation is the \emph{ML degree} \cite{TheMaximumLikelihoodDegree}.

\begin{definition}
The \emph{ML degree} of a scaled toric variety $V_A^c$ is the number of complex critical points of the likelihood function for generic $u\in \CC^n$, restricted to $V_A^c\setminus\cH$, where $$\cH \coloneqq \{ p \in \CC\PP^{n-1} \mid p_1 \ldots p_n (p_1 + \cdots + p_n) =0 \}.$$
\end{definition}

We now focus on toric models arising from the second hypersimplex $\Delta_{d,2}$. Assigning a scaling $c_{ij}$ to each of its lattice points, $e_i+e_j$ for $1\le i<j\le d$, where $e_i\in\RR^d$ denotes the~$i^{\textup{th}}$ standard basis vector, yields a scaled toric variety. 

\begin{notation*}
    For $c\in(\CC^*)^{d(d-1)/2}$, the scaled toric variety associated with $\Delta_{d,2}$ is denoted by~$V_d^c$. 
\end{notation*}

One goal proposed in \cite[Section~3]{MatroidStratificationOfMLDegreesOfIndependenceModels} is to understand how the ML degree of $V_d^c$ depends on the scaling. For any choice of scalings, the degree of $V_d^c$ is $2^{d-1}-d$ \cite[Proposition 3.2]{MatroidStratificationOfMLDegreesOfIndependenceModels}. Considering generic scalings, the ML degree equals the degree of the variety, while for non-generic scalings an ML degree drop occurs \cite[Corollary~8]{TheMaximumLikelihoodDegreeOfToricVarieties}. The locus of non-generic scalings is described by the \emph{principal $A$-determinant} \cite[Chapter 10]{DiscriminantsResultantsAndMultidimensionalDeterminants}.

Following \cite[Section 3]{MatroidStratificationOfMLDegreesOfIndependenceModels}, we associate to a given scaling $c \in (\CC^*)^{d(d-1)/2}$ a symmetric matrix $C\in \CC^{d\times d}$ whose entries are defined by $$C_{ij} = \begin{cases} c_{ij}, & i< j, \\ 0, & i=j, \end{cases} \quad \textup{and} \quad C_{ij} = C_{ji}.$$ We refer to this matrix as the \emph{scaling matrix} of $V_d^c$, and obtain the following description.

\begin{theorem}[{\cite[Theorem~3.6]{MatroidStratificationOfMLDegreesOfIndependenceModels}}] \label{thm:principal-A-determinant}
For $d\ge4$, the principal $A$-determinant of $\Delta_{d,2}$ is $$\prod_{E\subseteq[d], \, \vert E\vert\ge 4} \det C[E],$$ where $C[E]$ denotes the principal submatrix of $C$ indexed by $E$.
\end{theorem}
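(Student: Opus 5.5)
We would use the Gelfand--Kapranov--Zelevinsky factorization of the principal $A$-determinant \cite[Chapter 10]{DiscriminantsResultantsAndMultidimensionalDeterminants}. It expresses $E_A$ as a product, over all nonempty faces $\Gamma$ of $P=\Delta_{d,2}$, of the $A\cap\Gamma$-discriminants $\Delta_{A\cap\Gamma}$. Each factor is raised to a multiplicity; for a normal lattice polytope such as the second hypersimplex (the configuration is unimodular in $\ZZ A$), these multiplicities are all one. The proof then reduces to three steps: classify the faces of $\Delta_{d,2}$, compute each face discriminant as a polynomial in the scalings $c_{ij}$, and take the product.

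\emph{Step 1: the faces.} Every face of $\Delta_{d,2}$ is cut out by setting some coordinates to $0$ or $1$. Fixing $x_i=1$ gives a copy of the simplex $\Delta_{d-1,1}$ on the points $e_i+e_j$, $j\neq i$. Fixing $x_i=0$ for $i\notin E$ gives $\Delta_{E,2}$, the second hypersimplex on the ground set $E$. Hence the faces are of two kinds: simplices (including vertices and edges), and $\Delta_{E,2}$ for $E\subseteq[d]$ with $|E|\ge 3$. Note that $\Delta_{3,2}$ is a triangle, hence also a simplex, and $\Delta_{E,2}$ has full dimension $|E|-1$ inside its affine hull.

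\emph{Step 2: the discriminants.} The discriminant of a configuration of affinely independent points is $1$ up to monomial factors. The dual variety is then not a hypersurface, and the convention is $\Delta=1$, except for a single point, where $\Delta=c$ is a unit on the torus. So every simplicial face, including all $\Delta_{E,2}$ with $|E|\le3$, contributes only a monomial, which is invisible on $(\CC^*)^n$.

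For $|E|\ge4$ we must show that $\Delta_{A_E}=\det C[E]$ up to a constant and monomial. The Laurent polynomial is
$$f(\theta)=\sum_{i<j\in E} c_{ij}\theta_i\theta_j=\tfrac12\,\theta^T C[E]\,\theta,$$
a quadratic form. A singular point of $f$ with all $\theta_i\neq0$ satisfies $C[E]\theta=0$, which forces $\det C[E]=0$. Conversely, for generic $C[E]$ on the hypersurface $\{\det C[E]=0\}$, the kernel is spanned by a vector whose coordinates are all nonzero, so the closure of the singular locus is exactly this hypersurface. The main obstacle is showing that $\det C[E]$ is irreducible as a polynomial in the off-diagonal entries, so that it equals $\Delta_{A_E}$ up to a scalar and not merely up to a power or a product. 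We would argue irreducibility by induction on $|E|$. The determinant has degree $1$ in each variable $c_{ij}$, with cofactor coefficient. A nontrivial factorization would therefore split the variables into two groups, and one can rule this out by specialization. For instance, for $|E|=4$ we get
$$\det C[E]=c_{12}^2c_{34}^2+c_{13}^2c_{24}^2+c_{14}^2c_{23}^2-2(c_{12}c_{34}c_{13}c_{24}+c_{12}c_{34}c_{14}c_{23}+c_{13}c_{24}c_{14}c_{23}),$$
which is irreducible. Its degree $|E|$ also matches the degree of the dual of the corresponding toric variety, which would confirm multiplicity one.

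\emph{Step 3: assembly.} Multiply the face contributions. Monomial factors are discarded, and each subset $E$ with $|E|\ge4$ contributes $\det C[E]$ exactly once. This yields the stated product.
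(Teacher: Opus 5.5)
The paper does not prove this statement; it quotes it from Clarke et al. Your route is the natural one: the GKZ prime factorization of $E_A$ over faces, the observation that faces of $\Delta_{d,2}$ are either simplices or some $\Delta_{E,2}$, and the fact that simplicial faces contribute only monomials. Steps 1 and 3 are fine. However, the irreducibility argument in Step 2 is invalid, and the multiplicity claim is false as stated.

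\emph{Irreducibility.} You rightly call this the crux, but the argument rests on a false claim. $\det C[E]$ is not of degree $1$ in $c_{ij}$. The variable $c_{ij}$ sits in both positions $(i,j)$ and $(j,i)$, so $\det C[E]$ is quadratic in $c_{ij}$ with leading coefficient $-\det C[E\setminus\{i,j\}]\neq0$. Your own $4\times4$ formula shows this through the term $c_{12}^2c_{34}^2$. A variable can therefore occur in both factors, and the ``split the variables'' argument fails. Without irreducibility, ``generic point of the hypersurface'' is not well-defined either. The degree remark also presupposes an independent computation of the dual degree.

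One way to close the gap is a dimension count.
\begin{itemize}
\item For fixed $\theta\in(\CC^*)^E$, the map $c\mapsto C[E]\theta$ is a rescaled signless incidence matrix of $K_E$. It is surjective for $|E|\ge3$ because $K_E$ is not bipartite.
\item Hence the pairs $(c,[\theta])$ with $\theta$ in the torus and $C[E]\theta=0$ form an irreducible variety of dimension $N-1$, where $N=\binom{|E|}{2}$.
\item If $c\in(\CC^*)^N$ has a kernel vector with support $S\subsetneq E$, then necessarily $|S|\ge3$. The same count bounds that locus by dimension $N-|E|+|S|-1\le N-2$.
\item No $c_{ij}$ divides $\det C[E]$. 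It follows that $\{\det C[E]=0\}$ is irreducible and equals the closure of the discriminant locus.
\item In particular, a generic point of this hypersurface has corank one, with kernel spanned by some $v$ of full support. Writing $\operatorname{adj}C[E]=\lambda vv^T$, we get $\partial\det C[E]/\partial c_{ij}=2\lambda v_iv_j\neq0$ there.
\item So $\det C[E]$ is reduced, and therefore irreducible.
\end{itemize}

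\emph{Multiplicities.} Normality does not give multiplicity one. Moreover, the configuration is not unimodular once $d\ge6$: the six points $e_i+e_j$ coming from two disjoint triangles of $K_6$ span a simplex of normalized volume $2$. Already for $d=4$ the claim fails. There $\deg E_A=(3+1)\cdot4=16$ while $\det C$ has degree $4$, so each of the six vertex factors enters squared (the octahedron is not simple).

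This is harmless, since vertex factors are monomials. You still need to verify $m(\Gamma)=1$ for $\Gamma=\Delta_{E,2}$ with $|E|\ge4$. Use GKZ's formula $m(\Gamma)=i(\Gamma,A)\,u(A/\Gamma)$, computed in the lattice $\ZZ A$ of even coordinate sum.
\begin{itemize}
\item $i(\Gamma,A)=1$, because $\ZZ(A\cap\Gamma)$ is the full even-sum lattice in $\ZZ^E$.
\item $u(A/\Gamma)=1$, because the quotient configuration consists of $0$, the $e_i$, and the $e_i+e_j$ for $i,j\notin E$. These generate $\NN^{[d]\setminus E}$, so $X_A$ is smooth along the orbit of $\Gamma$.
\end{itemize}
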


This leads naturally to the problem of determining the minimum ML degree attainable by a suitable choice of scalings. We study this problem for $\Delta_{d,2}$ using a result by June Huh that allows the ML degree of toric varieties to be studied from a topological perspective.

\begin{theorem}[{\cite[Theorem~1]{TheMaximumLikelihoodDegreeOfAVeryAffineVariety}}] \label{thm:ML-degree-topological-Euler-characteristic}
Let $V \setminus \cH$ be a $d$-dimensional smooth variety. The ML degree of $V$ equals the signed Euler characteristic $(-1)^d \chi (V \setminus \cH)$.
\end{theorem}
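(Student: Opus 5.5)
The statement to prove is Huh's theorem (Theorem~\ref{thm:ML-degree-topological-Euler-characteristic}): for a smooth very affine variety $X = V\setminus\cH$ of dimension $d$, the ML degree equals $(-1)^d\chi(X)$. The plan is to identify the critical points of the likelihood function with the zeros of a generic holomorphic $1$-form on $X$. These zeros are then counted topologically by a Poincar\'e--Hopf type argument on a good compactification.

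\emph{Step 1: set up the $1$-form.} Since $X$ is very affine, the coordinates $p_1,\dots,p_n$ and $p_1+\dots+p_n$ are invertible regular functions on $X$. For $u\in\CC^n$, the differential of $\log L_u$ is the closed logarithmic $1$-form
\[
\omega_u=\sum_i u_i\, d\log p_i-\Big(\sum_i u_i\Big)\, d\log(p_1+\dots+p_n),
\]
and the critical points of $L_u$ on $X$ are exactly the zeros of $\omega_u|_X$. The forms $\omega_u$ span a linear system $W\subseteq H^0(\overline X,\Omega^1_{\overline X}(\log D))$. Here $\overline X$ is a smooth compactification with simple normal crossing boundary $D$, which exists by Hironaka's resolution of singularities.

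\emph{Step 2: genericity and no zeros at infinity.} This is the main obstacle: for generic $u$, $\omega_u$ must have only nondegenerate zeros, all lying in $X$. The plan is to choose $\overline X$ so that the monomial map $X\to(\CC^*)^{n}$ extends, which is possible because $X$ is very affine. Then, along each boundary stratum $D_I^\circ$, the residues of the log forms $d\log p_i$ give a linear map $W\to\CC^{|I|}$. It should be nonzero on every stratum, because the $p_i$ separate points and the map to the torus is a closed embedding. This gives a Bertini-type statement: $W$ generates $\Omega^1(\log D)$ along $D$ in the appropriate sense. An incidence-variety dimension count then shows that for generic $u$ there are no zeros on $D$ and that the zeros on $X$ are simple. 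If strata with zero residue appear, I would try to handle them by further blowing up so that every boundary divisor has a nonzero residue for some $d\log p_i$.

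\emph{Step 3: count the zeros.} The number of zeros of a generic section of a rank-$d$ bundle with isolated simple zeros is its top Chern class. This gives
\[
\#\{\omega_u=0\}=\int_{\overline X} c_d\big(\Omega^1_{\overline X}(\log D)\big).
\]
The logarithmic Gauss--Bonnet formula states that $c_d(\Omega^1_{\overline X}(\log D))=(-1)^d c_d(T_{\overline X}(-\log D))$. Its integral equals $(-1)^d\chi(X)$, by additivity of $\chi$ over the strata $D_I^\circ$ combined with the residue sequences for log tangent bundles. Hence the ML degree equals $(-1)^d\chi(X)$.
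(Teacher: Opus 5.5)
The paper gives no proof of this statement; it quotes it from Huh, so your argument has to stand on its own. Your overall route is the standard one and is sound: take a good compactification, count the zeros of $\omega_u$ by $\int_{\overline{X}} c_d(\Omega^1_{\overline{X}}(\log D))$, and apply logarithmic Gauss--Bonnet. But Step~2 as written rests on a false claim.

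\textbf{The gap in Step~2.} Nonvanishing residues do \emph{not} imply that $W$ generates $\Omega^1_{\overline{X}}(\log D)$ along $D$. At a point of $D_I^\circ$, generation needs two things: the residue map $W\to\CC^{|I|}$ must be surjective, and the forms in $W$ with zero residues must span the cotangent space of the stratum. The second is a sch\"on-type genericity condition at infinity, and it fails for general good compactifications.

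For example, blow up a point $y\in D_1^\circ$ on a surface. A character $\chi$ with $\mathrm{ord}_{D_1}\chi=0$ is a unit near $y$. So $d\log\chi$ is pulled back from a regular form, and it vanishes in the log fibre at every point of the exceptional curve $E$. Every other $d\log\chi$ agrees there with $a\,d\log t_1$, where $t_1$ is a local equation of $D_1$. Hence $W$ spans only a line in the rank-two log fibre along all of $E$.

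So the incidence dimension count over $D$ that you derive from generation does not go through. Your fallback does not help either. It targets boundary components with zero residue, which never occur, and blowing up boundary points only produces more divisors like $E$.

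\textbf{How to repair it.} What you need is weaker, and it follows from the residue observation you already have. Set $q_i=p_i/(p_1+\cdots+p_n)$, so that $\omega_u=\sum_i u_i\,d\log q_i$ and $X$ is closed in $(\CC^*)^n$ via $q$. The residue of $\omega_u$ along each component $D_j$ is the constant $\langle u,v_j\rangle$, where $v_j=(\mathrm{ord}_{D_j}q_i)_i$.

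Every $v_j$ is nonzero. Otherwise $q$ would extend over the generic point of $D_j$ with values in the closed set $q(X)$, contradicting separatedness of $\overline{X}$. It is closedness of the embedding that matters here, not separation of points.

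A logarithmic form with nonzero residue along $D_j$ cannot vanish at any point of $D_j$. So for $u$ off the finitely many hyperplanes $v_j^\perp$, the zero scheme $Z(\omega_u)$ lies in $X$. Being proper and contained in the affine variety $X$, it is finite. Hence $\omega_u$ is a regular section and $\deg Z(\omega_u)=\int_{\overline{X}}c_d(\Omega^1_{\overline{X}}(\log D))$.

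Simplicity of the zeros must then be proved on $X$ alone, where $W$ does generate $\Omega^1_X$. The incidence variety $\{(x,u)\in X\times\CC^n:\omega_u(x)=0\}$ is a vector bundle of rank $n-d$ over $X$, hence smooth. Generic smoothness of its projection to $\CC^n$, not merely a dimension count, gives reduced fibres.

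Finally, you need neither an extension of the monomial map nor extra blow-ups. Since $\operatorname{div}(q_i)$ is supported on $D$, $d\log q_i\in H^0(\overline{X},\Omega^1_{\overline{X}}(\log D))$ automatically.
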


\subsection{Results} \label{subsection:Results}

In light of Theorem \ref{thm:principal-A-determinant}, we study the ML degree of $V_d^c$ in terms of the vanishing principal minors of the associated scaling matrix $C\in\CC^{d\times d}$. In particular, we focus on scalings for which all factors of the principal $A$-determinant vanish, proving an initial observation presented in~\cite[Table 5]{MatroidStratificationOfMLDegreesOfIndependenceModels}. Our first result can be summarized~as~follows.

\begin{theorem} \label{thm:main-rank-three}
If $C\in\CC^{d\times d}$ has rank three, then the ML degree of $V_d^c$ is $$\mldeg(V_d^c) = \binom{d-1}{2}.$$ In particular, there exists a scaling $c \in (\CC^*)^{d(d-1)/2}$, for which this ML degree can be realized.
\end{theorem}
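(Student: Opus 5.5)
The plan is to compute $\chi(V_d^c\setminus\cH)$ directly and apply Theorem~\ref{thm:ML-degree-topological-Euler-characteristic}. First I would translate the rank condition into geometry. Since $C$ is symmetric of rank three, I would write $C = W^{T}BW$ with $W\in\CC^{3\times d}$ of rank three and $B$ a nondegenerate symmetric $3\times 3$ matrix. Let $q(x)=x^TBx$ and let $w_1,\dots,w_d$ be the columns of $W$. The zero diagonal gives $q(w_i)=0$, so all $[w_i]$ lie on the smooth conic $Q=\{q=0\}\subseteq\PP^2$. If two of them coincided, the corresponding $c_{ij}=w_i^TBw_j$ would vanish, because the tangent line at a point of $Q$ meets $Q$ only there. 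Hence $c\in(\CC^*)^{d(d-1)/2}$ forces $d$ distinct points on $Q$. Conversely, for distinct $t_1,\dots,t_d$ the points $w_i=(1,t_i,t_i^2)$ on the conic $xz=y^2$ give $c_{ij}=-(t_i-t_j)^2/2$ up to a fixed nonzero factor. These are all nonzero, which settles the existence part of the statement.

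Next I would identify $V_d^c\setminus\cH$. The monomial map $\theta\mapsto(c_{ij}\theta_i\theta_j)$ identifies the image, modulo the finite group $\{\pm1\}$ and the diagonal $\CC^*$, with the projective torus $T=(\CC^*)^d/\CC^*\subseteq\PP^{d-1}$. A finite free quotient multiplies $\chi$ by a constant, which I must track or avoid by using the correct lattice; I would check that it does not alter the final count. Moreover
$$\sum_{i<j}c_{ij}\theta_i\theta_j=\tfrac12\,q(W\theta).$$
So, up to this identification, $V_d^c\setminus\cH\cong U:=T\setminus\{q(W\theta)=0\}$. This is an open subset of a torus, hence smooth of dimension $d-1$, and therefore $\mldeg(V_d^c)=(-1)^{d-1}\chi(U)$.

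To compute $\chi(U)$ I would project from the vertex. The hypersurface $\{q(W\theta)=0\}\subseteq\PP^{d-1}$ is a cone with vertex $\PP(\ker W)\cong\PP^{d-4}$ over the conic $Q$. Linear projection gives a map $\pi:U\to\PP^2\setminus Q$, $\theta\mapsto[W\theta]$. A point $y$ lies in $U$ only off the vertex, since $0\notin$ the projection image. The fiber of $\pi$ over $y$ is an affine space $\CC^{d-3}$, namely $\PP^{d-3}$ minus $\PP^{d-4}$, with the $d$ affine hyperplanes $\{\theta_i=0\}$ removed. Its Euler characteristic is $\pm$ the beta-type invariant of the restricted arrangement, and it depends only on the matroid of this arrangement. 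That matroid is determined by the position of $y$ relative to the points $[w_i]$ and the lines $\overline{w_iw_j}$. Using additivity of $\chi$ over a constructible stratification of $\PP^2\setminus Q$, I expect three features. For generic $y$ the fiber arrangement has a free $\CC^*$-factor, so its $\chi$ is $0$. Only the special strata contribute, namely $y$ on the chords $\overline{w_iw_j}$ (but off $Q$) and intersection points of chords. The total should collapse to $(-1)^{d-1}\binom{d-1}{2}$.

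The main obstacle is this last step: correctly determining the fiber Euler characteristics on each stratum and the Euler characteristics of the strata themselves. The strata are punctured lines, with punctures at the points of $Q$ and at chord intersections. As a sanity check, I would first carry out the bookkeeping for $d=4$, where the answer must be $3$, and for $d=5$, where it must be $6$. If the stratified fiber computation becomes unwieldy, my fallback is to compute $\chi(U)=\chi(T)-\chi(T\cap\{q(W\theta)=0\})=-\chi(T\cap\{q(W\theta)=0\})$ instead. I would do this by inclusion–exclusion over coordinate subspaces, using that on each coordinate subspace the restricted quadric is again a cone over $Q$ determined by the subset of points $[w_i]$. This reduces the problem to a recursion in $d$ whose solution should be $\binom{d-1}{2}$.
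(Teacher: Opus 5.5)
Your set-up is sound. Writing $C=W^TBW$, observing that non-zero off-diagonal entries force $d$ distinct points $[w_i]$ on the smooth conic $Q$, and building the existence example from $w_i=(1,t_i,t_i^2)$ are all correct; the last is cleaner than the paper's $\sqrt{1+i^2}$ matrix. Your $\{\pm1\}$ worry is vacuous: $-1$ lies in the diagonal $\CC^*$, and the differences $e_i-e_k=(e_i+e_j)-(e_k+e_j)$ generate the sum-zero lattice, so $T\cong V_d^c\setminus\cH$ for $d\ge 3$.

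The gap is the step you yourself call the main obstacle. That step is the whole proof, and your predictions for it are wrong: generic fibres do \emph{not} have $\chi=0$. Already for $d=4$, the fibre $F_y=\{\theta\in(\CC^*)^4 : W\theta=\tilde y\}$ is a line minus $4-m(y)$ points, where $m(y)$ is the number of chords $\overline{w_iw_j}$ through $y$. Indeed, each $\theta_i$ is non-constant on the line because no three $w_j$ are collinear, and $\theta_i,\theta_j$ vanish at the same point iff $y$ lies on the complementary chord. Hence $\chi(F_y)=m(y)-3$, which equals $-3$ for generic $y$.

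In general, Gale duality identifies the matroid of the projectivised fibre arrangement with the dual of the matroid of the $d+1$ points $w_1,\dots,w_d,y\in\PP^2$. Here the fibre arrangement consists of the hyperplanes $\theta_i=0$ together with $\PP(\ker W)$, inside $\PP(W^{-1}(\CC\tilde y))\cong\PP^{d-3}$. The only non-trivial dependencies of the point configuration are the collinear triples $\{w_i,w_j,y\}$. So $\beta=\binom{d-1}{2}-m(y)$ and $\chi(F_y)=(-1)^{d-1}\bigl(\binom{d-1}{2}-m(y)\bigr)$, and integrating over $\PP^2\setminus Q$ gives
\[
\chi(U)=(-1)^{d-1}\Bigl[\tbinom{d-1}{2}\,\chi(\PP^2\setminus Q)-\sum_{i<j}\chi(\overline{w_iw_j}\setminus Q)\Bigr]=(-1)^{d-1}\tbinom{d-1}{2}.
\]
This uses $\chi(\PP^2\setminus Q)=1$ and the fact that each chord minus its two points on $Q$ has Euler characteristic $0$. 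Thus the generic stratum carries the entire answer and the chord strata contribute nothing, the reverse of what you anticipated. As written, your proposal leaves the decisive computation undone and aims it in the wrong direction.

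Your fallback also closes quickly, but again the key point is not where you place it. Let $Z=\{q(W\theta)=0\}\subseteq\PP^{d-1}$. Its restriction to a coordinate subspace $\PP^J$ is a cone over $Q$ only for $|J|\ge 3$, and then $\chi(Z\cap\PP^J)=(|J|-3)+2=|J|-1$. For $|J|=2$ the restriction is $c_{jk}\theta_j\theta_k$ (two points), and for $|J|=1$ it vanishes identically (one point). Since $\sum_J(-1)^{d-|J|}(|J|-1)=0$, M\"obius inversion keeps only the corrections at $|J|\le 2$. This gives $\chi(Z\cap T)=(-1)^d\bigl(1-d+\binom{d}{2}\bigr)=(-1)^d\binom{d-1}{2}$ directly, with no recursion, and then $\chi(U)=-\chi(Z\cap T)$.

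For comparison, the paper's computation does not use the conic picture. It applies the identity $\chi(\cV(f))=-\chi(\cV(f_0f_1))$ and splits by inclusion--exclusion. It then uses the Schur-complement lemma to see that $\cV(f_0)\cap\cV(f_1)$ is a rank-one quadric, hence a linear space, and inducts on $d$. Your geometric description would also justify the general-position property of that linear space, which the paper uses without comment.
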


In \cite[Conjecture 3.9]{MatroidStratificationOfMLDegreesOfIndependenceModels}, the authors conjecture that the ML degree given in the above theorem cannot be reduced by any choice of scalings. We prove this conjecture for small $d$ by deriving explicit formulas for the ML degree of $V_d^c$, depending on the rank of $C$.

\begin{theorem} \label{thm:main-minimum}
For $2\le d\le6$, the minimum ML degree of $V_d^c$ that can be achieved by choosing a suitably scaling $c\in(\CC^*)^{d(d-1)/2}$ is $\binom{d-1}{2}$.
\end{theorem}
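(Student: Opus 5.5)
Since Theorem~\ref{thm:main-rank-three} shows that $\binom{d-1}{2}$ is attained, it remains to prove the lower bound $\mldeg(V_d^c)\ge\binom{d-1}{2}$ for every scaling $c\in(\CC^*)^{d(d-1)/2}$ and every $2\le d\le 6$.

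For $d=2,3$ the polytope $\Delta_{d,2}$ is a point or a triangle. The variety is then a point or all of $\CC\PP^2$ after rescaling, so the ML degree is $1=\binom{d-1}{2}$ regardless of $c$. For $d\ge4$ I would argue via Theorem~\ref{thm:ML-degree-topological-Euler-characteristic}: the ML degree equals $(-1)^{d-1}\chi(V_d^c\setminus\cH)$. This Euler characteristic depends only on which principal minors $\det C[E]$ with $|E|\ge4$ vanish, i.e.\ on the stratum of the Euler stratification determined by Theorem~\ref{thm:principal-A-determinant}.

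\textbf{Combinatorial encoding.} The set of $E$ with $\det C[E]\neq0$ (together with $E=\emptyset$ and the $2$-subsets, whose minors $-c_{ij}^2$ never vanish) is the feasible family of the delta-matroid of the symmetric matrix $C$. So the possible strata are indexed by representable delta-matroids of zero-diagonal symmetric matrices with all off-diagonal entries nonzero. For $d\le6$ these can be enumerated up to the $S_d$-symmetry, using the symmetric exchange axiom and rank constraints. For instance, every principal minor of size $\ge4$ vanishes exactly when $\rank C\le3$, and since $C$ has full-support off-diagonal entries and zero diagonal, its rank is at least $3$ once $d\ge3$.

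\textbf{Euler characteristic per stratum.} On each stratum I would compute $\chi(V_d^c\setminus\cH)$ by additivity. Pull back along the torus parametrization, and note that $V_d^c\setminus\cH$ is the complement in the torus $(\CC^*)^{d-1}$ (modulo scaling) of the hypersurface $\sum_{i<j}c_{ij}\theta_i\theta_j=0$, i.e.\ of the quadric $\tfrac12\theta^TC\theta=0$. Hence
\[
\chi(V_d^c\setminus\cH)=-\chi\bigl(\{\theta^TC\theta=0\}\cap T\bigr)
\]
for the projective torus $T$. I would then stratify by coordinate subtori and use inclusion–exclusion over the coordinate hyperplanes $\theta_i=0$. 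Restricting to $\theta_i=0$ for $i\notin E$ gives the quadric of $C[E]$, and the Euler characteristic of a projective quadric depends only on its rank. So the answer is a sum over $E\subseteq[d]$ of signed Euler characteristics of projective quadrics of rank $\rank C[E]$, and $\rank C[E]$ is determined on each stratum by the delta-matroid data, because the rank equals the largest feasible subset size within $E$.

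This yields an explicit formula for the ML degree as a function of the delta-matroid. For $d\le6$ the proof reduces to checking, for each realizable delta-matroid, that the formula is at least $\binom{d-1}{2}$, with equality exactly in the rank-three case.

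\textbf{Main obstacle.} The hardest step is the classification of which delta-matroids are actually realizable by zero-diagonal symmetric matrices with nonzero off-diagonal entries for $d=5,6$, together with ruling out degenerate rank configurations. The first approach I would try is to organize the cases by $\rank C\in\{3,\dots,d\}$ and derive the ML degree formula as a function of the counts of subsets $E$ with each value of $\rank C[E]$. I would then show monotonicity: lowering the rank of any submatrix cannot push the total below the rank-three value. Any remaining cases would be settled by explicit enumeration.
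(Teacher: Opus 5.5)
Your route is correct and differs genuinely from the paper's. The paper computes $\chi(\cV(f))$ by writing $f=\theta_0f_0+f_1$ and using the Euler-discriminant identity $\chi(\cV(f))=-\chi(\cV(f_0f_1))$. It then identifies $\cV(f_0)\cap\cV(f_1)$ with a Schur-complement quadric and inducts on $d$ separately for each rank of $C$, and for $d=6$ for each vanishing pattern in its tables. You instead stratify the projective quadric $\{\theta^TC\theta=0\}\subseteq\CC\PP^{d-1}$ by coordinate support and invert over the Boolean lattice, which is valid by additivity of $\chi$.

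Carried to the end, your computation is cleaner than the paper's. A rank-$\rho$ quadric in $\CC\PP^{n-1}$ is a cone with vertex $\CC\PP^{n-\rho-1}$ over a smooth quadric in $\CC\PP^{\rho-1}$. Its Euler characteristic is therefore $n$ for $\rho$ even and $n-1$ for $\rho$ odd. Since $\sum_F(-1)^{|F|}|F|=0$, for $d\ge3$ you get
\[
\mldeg(V_d^c)=\sum_{F\subseteq[d],\ \rank C[F]\ \text{odd}}(-1)^{|F|+1}.
\]
This recovers Theorem~\ref{thm:ML-degree-rank-three} and Theorem~\ref{thm:rank-four-ML-degree} without induction. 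In the paper's notation, for $d=6$ it gives $5+k+l+r-\epsilon$, with $\epsilon=1$ if $\rank C$ is odd and $\epsilon=0$ otherwise. That is exactly the paper's $10$, $5+k+r$, $4+k+l+r$ and $5+k+l+r$ for ranks $3,4,5,6$. Because $\rank C[F]$ is the size of the largest feasible subset of $F$, the formula also proves delta-matroid invariance for every $d$ (Conjecture~\ref{conjecture:main-delta-matroid-invariance}), not only for $d\le6$.

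The weak part is the lower bound, which is the actual content of the theorem and which you leave as ``check each case.'' Three corrections:

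\begin{enumerate}
\item Realizability is not the obstacle. Being a delta-matroid is a necessary condition (Theorem~\ref{thm:symmetric-matrix-delta-matroid}). It therefore suffices to verify the inequality on all delta-matroids that contain $\emptyset$ and all $2$- and $3$-sets but no singletons, which is what the paper's brute-force tables do. For $d=5$ nothing is needed at all: the formula reads $5+k+\epsilon$, and $k\ge1$ when $\rank C=4$.
\item No monotonicity of the formula can replace the combinatorial input. For $d=6$, take the set system whose only feasible sets of size $\ge4$ are $\{3,4,5,6\}$ and $\{1,2,5,6\}$. It has rank four, $k=2$, $l=0$, $r=2$, and your formula gives $9<10$. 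It is excluded only because it violates symmetric exchange, with $X=\{3,4,5,6\}$, $Y=\{1,2,5,6\}$ and $x=1$. So you must genuinely use the exchange axiom, either by enumeration or by short arguments. For example, each rank-four $5$-subset contains at least three feasible $4$-subsets; and for rank five, the kernel vector of $C$ has support of size $l\ge4$.
\item Drop ``with equality exactly in the rank-three case.'' It is not part of the statement, and delta-matroid data cannot certify it. The rank-five patterns with $k=0$ (Case~1 of Table~\ref{table:ML-degrees-order-6-rank-5}) also give $10$.
\end{enumerate}

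For $d=2$, your value $1$ for the ML degree of a point is right, but $\binom{1}{2}=0$, so the asserted equality fails there. This is a defect of the statement at $d=2$, where the torus parametrization is not injective. You should flag it rather than write $1=\binom{d-1}{2}$.
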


To prove Theorem~\ref{thm:main-minimum}, we establish a new connection between the ML degrees of scaled toric varieties arising from second hypersimplices and \emph{delta-matroids}. Introduced by Andr\`e Bouchet \cite[Section 6]{GreedyAlgorithmAndSymmetricMatroids}, these combinatorial structures are characterized by the symmetric exchange axiom, generalizing the basis exchange property of matroids.
To every scaling matrix we associate a delta-matroid via the non-singular principal submatrices of $C$. In this way, the delta-matroid encodes the vanishing pattern of the principal $A$-determinant. 

\begin{theorem} \label{thm:main-delta-matroid-invariance}
Let $2 \le d\le 6$. If $C_1, C_2\in\CC^{d\times d}$ are scaling matrices whose principal minors have the same vanishing pattern, up to simultaneous permutation of rows and columns, then $$\mldeg(V_d^{c_1}) = \mldeg(V_d^{c_2}).$$
\end{theorem}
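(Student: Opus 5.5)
The plan is to reduce the ML degree to an Euler characteristic and then compute that Euler characteristic stratum by stratum, using only data determined by the vanishing pattern of principal minors. By Theorem~\ref{thm:ML-degree-topological-Euler-characteristic}, provided $V_d^c\setminus\cH$ is smooth, we have $\mldeg(V_d^c)=(-1)^{d-1}\chi(V_d^c\setminus\cH)$. Smoothness holds because the torus orbit is smooth and $\cH$ contains the coordinate hyperplanes, so only the dense torus is involved. Pulling back along $\psi^c_A$, the space $V_d^c\setminus\cH$ is isomorphic to the very affine variety
\[
X_c=\{\theta\in(\CC^*)^d/\pm \,:\, \textstyle\sum_{i<j} c_{ij}\theta_i\theta_j\neq 0\},
\]
modulo the finite kernel of the parametrization. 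In other words, it is the complement in the torus of the quadric $Q_C(\theta)=\theta^T C\theta=0$. So we need $\chi\big((\CC^*)^d\setminus\{Q_C=0\}\big)=-\chi\big(\{Q_C=0\}\cap(\CC^*)^d\big)$, since $\chi((\CC^*)^d)=0$. Passing to projective space, it suffices to compute $\chi$ of the projective quadric $\{Q_C=0\}\subseteq\PP^{d-1}$ restricted to the open torus.

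To compute this, apply inclusion–exclusion over coordinate subspaces. For $E\subseteq[d]$, let $U_E$ be the torus of points whose support is exactly $E$. Then
\[
\chi\big(\{Q_C=0\}\cap(\CC^*)^{d-1}\big)=\sum_{E\subseteq[d]}(-1)^{d-|E|}\,\chi\big(\{Q_{C[E]}=0\}\subseteq\PP^{|E|-1}\big),
\]
where each term is the Euler characteristic of a full projective quadric. That projective quadric is a cone over a smooth quadric, so its Euler characteristic depends only on $|E|$ and $\rank C[E]$. Hence $\mldeg(V_d^c)$ is a universal function of the ranks of all principal submatrices of $C$. This reduces the theorem to showing that, for $d\le6$, the multiset $\{(|E|,\rank C[E])\}_E$ is determined, up to permutation, by which principal minors vanish, i.e.\ by the delta-matroid of $C$.

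This last step is the main obstacle. The rank of a symmetric matrix $M$ equals the size of a largest nonsingular principal submatrix. This follows from the standard fact that for symmetric (or skew) matrices, the maximal nonsingular principal minors realize the rank: a principal submatrix indexed by a maximal independent row set is nonsingular. Consequently $\rank C[E]=\max\{|F| : F\subseteq E,\ \det C[F]\neq0\}$, and this is read off directly from the delta-matroid's feasible sets. If that fact holds verbatim, the restriction $d\le6$ is unnecessary.

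I would therefore first verify the principal-rank lemma for complex symmetric matrices with zero diagonal. If it failed in some degenerate cases, I would fall back on classifying the realizable delta-matroids for $d\le 6$ and checking each case. In that fallback the vanishing of $2\times2$ minors is impossible since $c_{ij}\neq0$, and the zero diagonal forces all singletons to be infeasible. One would then enumerate the possible rank profiles per pattern, which explains the bound $d\le6$ in the statement.
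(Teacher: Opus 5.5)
Your argument is correct, and it takes a genuinely different route from the paper's. Two small repairs first.

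\emph{The affine description.} As written, the affine picture in your first paragraph is off. The set $\{\theta\in(\CC^*)^d : \theta^TC\theta\neq0\}$ is $d$-dimensional. Like $\{\theta^TC\theta=0\}\cap(\CC^*)^d$, it is stable under the free scaling action, so both have Euler characteristic $0$. What you actually use, and what holds for $d\ge3$, is $V_d^c\setminus\cH\cong T\setminus\cV(f)$, where $T\cong(\CC^*)^{d-1}$ is the dense torus of $\CC\PP^{d-1}$. This is Proposition~\ref{prop:ML-degree-f-polynomial}, so just cite it. For $d=2$ the statement is trivial, since only one vanishing pattern occurs.

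\emph{The principal-rank lemma.} No fallback is needed, because the lemma holds for any symmetric $M$ over any field. Suppose the rows indexed by $I$ form a basis of the row space, and let $R$ be that $|I|\times d$ block. Then $M=PR$ for some $P$, and symmetry gives $M=R^TP^T$. Restricting to the rows in $I$ gives $R=M[I]P^T$, so $M[I]$ is nonsingular. Hence $\rank C[E]=\max\{|F| : F\subseteq E,\ \det C[F]\neq0\}$, and your proof works for every $d$.

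\emph{Comparison with the paper.} The paper splits $f=\theta_0f_0+f_1$ and applies inclusion--exclusion to $\cV(f_0f_1)$. It identifies $\cV(f_0)\cap\cV(f_1)$ with a Schur-complement quadric (Lemma~\ref{lemma:intersection-rank}). It then works case by case through the ranks of $C$ for $d=5,6$, using the computer-enumerated delta-matroid tables to discard impossible patterns. This produces explicit formulas such as $6+k$, $5+k+r$, $4+k+l+r$ and $5+k+l+r$, and invariance is read off from these.

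You instead stratify the whole projective quadric by coordinate supports and M\"obius-invert. This reduces everything to Euler characteristics of complete quadrics $\{\theta^TC[E]\theta=0\}\subseteq\CC\PP^{|E|-1}$, which depend only on $|E|$ and $\rank C[E]$. The argument is uniform in $d$ and computer-free, so it settles Conjecture~\ref{conjecture:main-delta-matroid-invariance} in general.

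In fact it gives more. A rank-$\rho$ quadric in $\CC\PP^{n-1}$ has Euler characteristic $n$ for $\rho$ even and $n-1$ for $\rho$ odd, and $\sum_{E\subseteq[d]}(-1)^{|E|}|E|=0$. So for $d\ge3$ your formula collapses to
\[
\mldeg(V_d^c)=\sum_{E\subseteq[d],\ \rank C[E]\ \text{odd}}(-1)^{|E|+1}.
\]
This recovers Theorems~\ref{thm:ML-degree-rank-three} and~\ref{thm:rank-four-ML-degree} and all the formulas for $d=5,6$.

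Your route also makes explicit something the paper uses tacitly. Its formulas involve $r$, the number of rank-three principal $5\times5$ submatrices, and $r$ is a function of the vanishing pattern only by this same principal-rank lemma. What the paper's case analysis and delta-matroid enumeration add is control over which patterns are actually realizable. That is what the minimum in Theorem~\ref{thm:main-minimum} needs, but the invariance statement does not.
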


This corresponds to the situation in independence models, where the ML degree is a matroid invariant \cite[Theorem 1.3]{MatroidStratificationOfMLDegreesOfIndependenceModels}, and suggests that the ML degree of $V_d^c$ is a delta-matroid invariant.

\begin{conjecture} \label{conjecture:main-delta-matroid-invariance}
Let $C_1, C_2 \in \CC^{d\times d}$ be scaling matrices. If the delta-matroids associated to $C_1$ and $C_2$ are isomorphic, then the ML degrees of $V_d^{c_1}$ and $V_d^{c_2}$ coincide.
\end{conjecture}

\begin{outline*} \upshape
Section \ref{sec:EulerStratificationsOfQuadrics} relates the ML degrees of scaled toric varieties associated with second hypersimplices to the topological Euler characteristic of certain quadrics. Section \ref{sec:QuadricsOfRankThreeAndFour} studies scaling matrices of rank three and four and provides formulas for their associated ML degrees. Section \ref{sec:DeltaMatroidsFromSymmetricMatrices} explains how delta-matroids arise from symmetric matrices, which are then used in Section \ref{sec:SecondHypersimplexOfOrderFiveAndSix} to study the Euler stratification of second hypersimplices of order five and six. The article concludes with a discussion of future research directions in Section \ref{sec:Discussion}.
\end{outline*}

\noindent\textbf{Research Data.} The code associated with this work is publicly available on \texttt{Zenodo}~\cite{zenodo}.

\section{Euler Stratifications of Quadric Surfaces}
\label{sec:EulerStratificationsOfQuadrics}

Following Theorem~\ref{thm:ML-degree-topological-Euler-characteristic}, we study the Euler stratification of the scaled toric variety $V_d^c$ through its topological Euler characteristic. In this setting, computing the signed Euler characteristic of $V_d^c\setminus\cH$ reduces to determining the Euler characteristic of certain quadrics in the dense torus of $\CC\PP^{d-1}$. These quadrics are defined by the polynomial whose exponent vectors are precisely the lattice points of the second hypersimplex of order $d$. 

More specifically, for a given scaling $c\in(\CC^*)^{d(d-1)/2}$, we consider $$f=\theta_0(c_{01}\theta_1+\ldots+c_{0{d-1}}\theta_{d-1})+\sum_{1\le i<j\le d-1}c_{ij}\theta_i\theta_j \in\CC[\theta_0,\ldots,\theta_{d-1}].$$ For our further analysis, we write $f$ as $$f = \theta_0f_0+f_1, \qquad f_0\coloneqq c_{01}\theta_1+\ldots+c_{0d-1}\theta_{d-1}, \qquad f_1\coloneqq\sum_{1\le i<j\le d-1}c_{ij}\theta_i\theta_j.$$ 
Let $T\subseteq \CC\PP^{d-1}$ denote the dense torus of $\CC\PP^{d-1}$, i.e., $T\simeq (\CC^*)^{d-1} \subseteq \CC\PP^{d-1}$, and set $$\cV(f)\coloneqq\{ [\theta_0:\theta_1:\cdots:\theta_d]\in T \mid f(\theta_0,\theta_1,\ldots,\theta_d)=0 \}.$$ The ML degree of $V_d^c$ can then be expressed in terms of the Euler characteristic of $\cV(f)$. A similar description appears in \cite[Section 3]{TheEulerStratificationForP1P1Pn}, where the authors study Euler stratifications of three-way independence models, extending the results of \cite[Section~2]{MatroidStratificationOfMLDegreesOfIndependenceModels}.

\begin{proposition} \label{prop:ML-degree-f-polynomial}
For the scaled toric variety $V_d^c$, $$\mldeg(V_d^c) = (-1)^d\chi(\cV(f)) = (-1)^{d-1} \chi(\cV(f_0f_1)).$$
\end{proposition}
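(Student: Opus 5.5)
The plan is to derive both equalities directly from Theorem~\ref{thm:ML-degree-topological-Euler-characteristic}. The first comes from identifying $V_d^c\setminus\cH$ with the complement of a quadric in a torus. The second comes from projecting away the coordinate $\theta_0$ and using multiplicativity of the Euler characteristic over fibers.

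For the first equality, relabel the coordinates of $\CC^d$ as $\theta_0,\ldots,\theta_{d-1}$, so that the lattice points $e_i+e_j$ of $\Delta_{d,2}$ give the monomial map $\theta\mapsto[c_{ij}\theta_i\theta_j]_{i<j}$.
\begin{enumerate}[(i)]
\item I first check that the image of this map is exactly $V_d^c\setminus\{p_1\cdots p_n=0\}$, the dense torus orbit of $V_d^c$. Since $\theta\mapsto\lambda\theta$ rescales every coordinate by $\lambda^2$, the map factors through the projective torus $T\simeq(\CC^*)^{d}/\CC^*$.
\item On $T$ the map is injective for $d\ge3$, because $\theta_i/\theta_j=(\theta_i\theta_k)/(\theta_j\theta_k)$ for any $k\ne i,j$ recovers $\theta$ up to a scalar. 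Hence the orbit is isomorphic to $T\simeq(\CC^*)^{d-1}$; the cases $d=2$ are trivial and handled directly.
\item Under this identification $p_1+\cdots+p_n$ pulls back to $\sum c_{ij}\theta_i\theta_j=f$. Therefore $V_d^c\setminus\cH\cong T\setminus\cV(f)$, which is smooth of dimension $d-1$ as an open subset of a torus.
\item Theorem~\ref{thm:ML-degree-topological-Euler-characteristic} then gives $\mldeg(V_d^c)=(-1)^{d-1}\chi(T\setminus\cV(f))$. Since $\chi(T)=0$ and $\chi$ is additive on a closed subvariety and its complement, $\chi(T\setminus\cV(f))=-\chi(\cV(f))$, which yields $(-1)^d\chi(\cV(f))$.
\end{enumerate}

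For the second equality, let $T'\simeq(\CC^*)^{d-2}$ be the dense torus of $\CC\PP^{d-2}$ with coordinates $[\theta_1:\cdots:\theta_{d-1}]$, and consider the projection $\pi:\cV(f)\to T'$. Fix a representative $\theta'$ of a point of $T'$. Because $f=\theta_0f_0+f_1$ is homogeneous of degree $2$, the fiber over $[\theta']$ is $\{\theta_0\in\CC^*: \theta_0f_0(\theta')+f_1(\theta')=0\}$. There are four cases:
\begin{enumerate}[(a)]
\item If $f_0\ne0$ and $f_1\ne0$, the fiber is the single point $\theta_0=-f_1/f_0$.
\item If $f_0\ne0$ and $f_1=0$, the only solution is $\theta_0=0$, which is excluded, so the fiber is empty.
\item If $f_0=0$ and $f_1\ne0$, there is no solution, so the fiber is empty.
\item If $f_0=f_1=0$, the fiber is all of $\CC^*$, with Euler characteristic $0$.
\end{enumerate}
These conditions do not depend on the choice of representative $\theta'$.

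Stratifying $T'$ by these four constructible pieces and using multiplicativity of $\chi$ over locally trivial pieces gives $\chi(\cV(f))=\chi(T'\setminus\cV(f_0f_1))$. Here $\cV(f_0f_1)\subseteq T'$. Since $\chi(T')=0$, this equals $-\chi(\cV(f_0f_1))$. Substituting into the first equality gives $(-1)^{d-1}\chi(\cV(f_0f_1))$.

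The main point needing care is step (i): that the torus orbit is isomorphic to $T$ rather than a finite quotient of it, and that the hypotheses of Theorem~\ref{thm:ML-degree-topological-Euler-characteristic} apply. This holds because the lattice generated by the $e_i+e_j$ is the even-sum sublattice, and the induced map on $(\CC^*)^d/\CC^*$ is injective, as shown in step (ii). The fiber computation in the second step is routine, apart from stating the Euler-characteristic fibration property for the piece with fibers $\CC^*$, which is a trivial bundle over its base.
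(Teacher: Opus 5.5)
Your proof is correct. For the first equality you follow the paper's route: identify $V_d^c\setminus\cH$ with $T\setminus\cV(f)$ via the parameterization, apply Theorem~\ref{thm:ML-degree-topological-Euler-characteristic}, and use $\chi(T)=0$. Your explicit injectivity check on $T$, which the paper leaves to a reference, is worth including. The standing assumption $\ZZ A=\ZZ^d$ fails for $\Delta_{d,2}$, since the $e_i+e_j$ span only the even-sum sublattice, so the isomorphism is not automatic. For the second equality you take a different route. The paper simply invokes \cite[Theorem~2.2]{EulerDiscriminantOfComplementsOfHyperplanes}, whereas you prove the needed special case directly. Because $f=\theta_0f_0+f_1$ is linear in $\theta_0$, the projection $\cV(f)\to T'$ has exactly three kinds of fiber. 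Over $T'\setminus\cV(f_0f_1)$ the fiber is a single point, given by the well-defined section $\theta_0=-f_1/f_0$. Over $\cV(f_0)\cap\cV(f_1)$ the fiber is $\CC^*$, and this piece is a product because $T\to T'$ is a trivial $\CC^*$-bundle. Elsewhere the fiber is empty. Hence $\chi(\cV(f))=\chi(T')-\chi(\cV(f_0f_1))=-\chi(\cV(f_0f_1))$. Your argument is self-contained and shows exactly where linearity in $\theta_0$ and $\chi(\CC^*)=\chi(T')=0$ enter. The citation is shorter and places the identity in a general theory of Euler discriminants, but hides this mechanism. One small correction: $d=2$ cannot be ``handled directly'' and must be excluded. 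There $T\simeq\CC^*$ collapses onto the point $V_2^c$, whose ML degree is $1$, while $(-1)^2\chi(\cV(f))=0$ and $(-1)^{1}\chi(\cV(f_0f_1))=-1$. Both your argument and the paper's tacitly require $d\ge3$.
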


\begin{proof}
Let $\cV(f)^c$ denote the complement of $\cV(f)$. If $\psi^c$ is the parameterization associated with $\Delta_{d,2}$, its restriction to $\cV(f)^c$ induces an isomorphism $\cV(f)^c \simeq V_d^c \setminus\cH$. Hence, both varieties have the same Euler characteristic. Since $\dim (V_d^c)=d-1$ and $\chi(\cV(f)^c) = -\chi(\cV(f))$, $$\mldeg(V_d^c) = (-1)^{d-1}\chi(V_d^c\setminus\cH) = (-1)^{d-1}\chi(\cV(f)^c) = (-1)^d \chi(\cV(f)).$$ For further details on this correspondence, we refer the reader to \cite[Section 6.4]{EulerStratificationsOfHypersurfaceFamilies}. Moreover, by \cite[Theorem 2.2]{EulerDiscriminantOfComplementsOfHyperplanes}, we have $\chi(\cV(f))=-\chi(\cV(f_0f_1))$, and therefore \begin{equation*}\mldeg(V_d^c)=(-1)^{d-1}\chi(\cV(f_0f_1)). \qedhere \end{equation*}
\end{proof}

By inclusion-exclusion, we further decompose $\chi(\cV(f_0f_1))$ into three parts: \begin{equation*} \label{eq:inclusion-exclusion} \chi(\cV(f_0f_1)) = \chi(\cV(f_0)) + \chi(\cV(f_1)) - \chi(\cV(f_0)\cap\cV(f_1)).\end{equation*}

The first contribution comes from the hyperplane $\cV(f_0)$.

\begin{lemma} \label{lemma:Euler-characteristic-f0}
We have $\chi(\cV(f_0)) = (-1)^{d-1}$.
\end{lemma}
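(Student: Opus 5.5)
The plan is to identify $\cV(f_0)$ with a generic affine hyperplane in a torus and then compute its Euler characteristic by an inclusion--exclusion recursion on the number of coordinates.

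\emph{Choice of ambient torus.} Both $f_0$ and $f_1$ are homogeneous polynomials in $\theta_1,\ldots,\theta_{d-1}$ alone, so I read $\cV(f_0)$, $\cV(f_1)$ and $\cV(f_0f_1)$ as subvarieties of the dense torus of $\CC\PP^{d-2}$ with coordinates $[\theta_1:\cdots:\theta_{d-1}]$. This is the Cayley-trick setting in which \cite[Theorem 2.2]{EulerDiscriminantOfComplementsOfHyperplanes} is applied in Proposition~\ref{prop:ML-degree-f-polynomial}. I would say this explicitly, because the other reading fails. In the torus of $\CC\PP^{d-1}$, the coordinate $\theta_0$ would be a free $\CC^*$-factor of $\cV(f_0)$, and then $\chi(\cV(f_0))=0$. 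As a sanity check on this reading, take $d=3$: there $\cV(f_0)$ is a single point of $\CC^*$, $\cV(f_1)=\emptyset$, and Proposition~\ref{prop:ML-degree-f-polynomial} returns the correct ML degree $1$.

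\emph{Reduction to a standard hyperplane.} On this torus I dehomogenize by setting $\theta_{d-1}=1$. This identifies $\cV(f_0)$ with
\[
\{(\theta_1,\ldots,\theta_{d-2})\in(\CC^*)^{d-2} \mid c_{01}\theta_1+\cdots+c_{0,d-2}\theta_{d-2} = -c_{0,d-1}\}.
\]
All the $c_{0j}$ are nonzero, so the torus automorphism $y_i=-c_{0i}\theta_i/c_{0,d-1}$ maps this isomorphically onto
\[
H_{n}\coloneqq\{y\in(\CC^*)^{n}\mid y_1+\cdots+y_n=1\}, \qquad n=d-2.
\]
Hence $\chi(\cV(f_0))=\chi(H_{d-2})$.

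\emph{Recursion.} To compute $\chi(H_n)$, consider
\[
X=\{y\in(\CC^*)^{n-1}\times\CC \mid y_1+\cdots+y_n=1\}.
\]
On $X$ the last coordinate is determined by $y_n=1-\sum_{i<n}y_i$, so $X$ is the graph of a function on $(\CC^*)^{n-1}$ and $\chi(X)=\chi((\CC^*)^{n-1})=0$ for $n\ge2$. Now decompose $X$ according to whether $y_n\neq0$ or $y_n=0$. The part with $y_n\neq 0$ is $H_n$, and the part with $y_n=0$ is $H_{n-1}$. Additivity of $\chi$ over this decomposition into a closed piece and its open complement gives $0=\chi(H_n)+\chi(H_{n-1})$, so $\chi(H_n)=-\chi(H_{n-1})$.

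For the base case, $H_1=\{1\}$ and $\chi(H_1)=1$. Therefore $\chi(H_{d-2})=(-1)^{d-3}=(-1)^{d-1}$ for all $d\ge3$. If $d=2$ also needs to be covered, the point $H_0=\emptyset$ case should be checked separately against the statement.

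The only delicate step is fixing the ambient torus. Once that is settled, the computation itself is routine.
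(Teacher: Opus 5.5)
Your argument is correct for $d\ge3$, and it takes a different route from the paper's. Your choice of ambient torus is the one the paper uses tacitly: its proof works with the hyperplane $H\subseteq\CC\PP^{d-2}$ in the coordinates $\theta_1,\ldots,\theta_{d-1}$. So your remark that the torus of $\CC\PP^{d-1}$ would give $\chi=0$ makes explicit something the paper leaves unstated.

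The paper's proof stays projective. It uses $\chi(H)=d-2$ and notes that, since every $c_{0j}\neq0$, each $k$-fold coordinate section of $H$ is a copy of $\CC\PP^{d-3-k}$ for $k\le d-3$ and is empty for larger $k$. It then evaluates the alternating sum $\sum_{k=1}^{d-3}(-1)^{k+1}\binom{d-1}{k}(d-2-k)=(d-2)-(-1)^{d-1}$.

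You instead use the nonvanishing of the $c_{0j}$ once, for a torus rescaling to $y_1+\cdots+y_n=1$. You then get $\chi(H_n)=-\chi(H_{n-1})$ because this hyperplane inside $(\CC^*)^{n-1}\times\CC$ is a graph over $(\CC^*)^{n-1}$. This needs no binomial identities and makes the sign pattern transparent.

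What the paper's version buys is a reusable template. The same ``projective Euler characteristic minus coordinate sections'' computation reappears for the quadrics $\cV(f_0)\cap\cV(f_1)$ in Theorem~\ref{thm:ML-degree-rank-three}, Theorem~\ref{thm:rank-four-ML-degree} and Section~\ref{sec:SecondHypersimplexOfOrderFiveAndSix}, where your graph trick does not apply directly.

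On the case you left open: for $d=2$ one has $\cV(f_0)=\emptyset$ under either reading of the torus, so $\chi=0\neq-1$. The statement therefore genuinely requires $d\ge3$; the paper's binomial identity also fails at $d=2$. This is harmless, since the lemma is only applied with $d\ge4$.
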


\begin{proof}
We first consider the hypersurface $f_0=0$ in the complex projective space, i.e., $$H\coloneqq\{[\theta_1:\cdots:\theta_{d-1}]\in\CC\PP^{d-2}\mid c_{01}\theta_1+\ldots+c_{0d-1}\theta_{d-1}=0 \}.$$ Since $H$ is a hyperplane in $\CC\PP^{d-2}$, we have $H\simeq\CC\PP^{d-3}$ and therefore $$\chi(H)=\chi(\CC\PP^{d-3})=d-2.$$ The variety $\cV(f_0)$ is obtained from $H$ by removing all points with zero coordinates.  
For $i\in[d-1]$, set $H_i\coloneqq H\cap\{ \theta_i=0\}$. Then $$\chi(\cV(f_0))= \chi\left(H\setminus\bigcup_{i=1}^{d-1}H_i\right)=\chi(H)-\chi\left(\bigcup_{i=1}^{d-1}H_i\right).$$ Each $H_i$ is a hyperplane in $H$, i.e., $H_i\simeq\CC\PP^{d-4}$, and therefore $\chi(H_i)=d-3$. More generally, for $k\in[d-3]$ and distinct indices $i_1,\ldots, i_k$, we have $H_{i_1}\cap\ldots\cap H_{i_k}\simeq\CC\PP^{d-3-k}$, so that $\chi(H_{i_1}\cap\ldots\cap H_{i_k})=d-2-k$. If $k\ge d-2$, the intersection is empty and its Euler characteristic is zero. By the inclusion-exclusion principle and a straightforward computation, we obtain \begin{align*}
\chi\left(\bigcup_{i=1}^{d-1}H_i\right) &= \sum_{k=1}^{d-1}(-1)^{k+1}\sum_{1\le i_1<\ldots< i_k\le d-1} \chi(H_{i_1}\cap\ldots\cap H_{i_k}) \\
&= \sum_{k=1}^{d-3}(-1)^{k+1}\binom{d-1}{k}(d-2-k) = (d-2)-(-1)^{d-1}.
\end{align*}
Consequently, \begin{equation*}\chi(\cV(f_0))=\chi(H)-\chi\left(\bigcup_{i=1}^{d-1}H_i\right)=(d-2)-[(d-2)-(-1)^{d-1}]=(-1)^{d-1}.\qedhere\end{equation*}
\end{proof}

The second term $\chi(\cV(f_1))$ corresponds to the second hypersimplex $\Delta_{d-1,2}$. Hence, knowing the ML degree of $V_{d-1}^c$ for the respective scaling yields $\chi(\cV(f_1))$ via Proposition \ref{prop:ML-degree-f-polynomial}.

The last contribution arises from $\cV(f_0)\cap\cV(f_1)$, whose geometric interpretation is as follows. The variety $\cV(f)$ is a quadric surface in the torus $T$, with representing matrix $\frac{1}{2}C$: $$f = \theta_0 f_0 + f_1 = (\theta_0,\ldots,\theta_{d-1}) \frac{1}{2} C (\theta_0,\ldots,\theta_{d-1})^T.$$

\begin{lemma} \label{lemma:intersection-rank}
The intersection $\cV(f_0)\cap\cV(f_1)$ is a quadric whose representing matrix has rank $$\rank(C)-2.$$
\end{lemma}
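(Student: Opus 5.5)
On $\cV(f_0)$ the linear form $f_0$ vanishes, so I will use it to eliminate one variable and express $\cV(f_0)\cap\cV(f_1)$ as a quadric in $d-3$ projective coordinates. The goal is then a rank computation: the restricted quadratic form should have rank $\rank(C)-2$.

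Set $v=(c_{01},\dots,c_{0,d-1})^T$, so that $f_0=v^T\theta'$ with $\theta'=(\theta_1,\dots,\theta_{d-1})$. Let $B$ be the lower-right $(d-1)\times(d-1)$ block of $C$, so that $f_1=\tfrac12\theta'^T B\theta'$ and
\[
C=\begin{pmatrix}0 & v^T\\ v & B\end{pmatrix}.
\]
The intersection lives in the hyperplane $W=\ker v^T\subseteq\CC^{d-1}$, which has dimension $d-2$ because $v\neq 0$ (all $c_{0j}\in\CC^*$). The quadric in question is the restriction of the form $B$ to $W$. Choose a basis $w_1,\dots,w_{d-2}$ of $W$, collected as the columns of a matrix $P$. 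Then $\cV(f_0)\cap\cV(f_1)$ is the zero set, in the torus, of the quadric with representing matrix $\tfrac12 P^TBP$. So the claim reduces to the linear algebra statement
\[
\rank(P^TBP)=\rank(C)-2.
\]

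To prove this, I will change coordinates in $\CC^d$. Pick $u\in\CC^{d-1}$ with $v^Tu=1$, and take the basis $e_0,\ (0,u),\ (0,w_1),\dots,(0,w_{d-2})$ of $\CC^d$. In this basis $C$ becomes congruent to
\[
\begin{pmatrix}0&1&0\\ 1&\ast&\ast\\ 0&\ast&P^TBP\end{pmatrix}.
\]
The entries in row and column $e_0$ vanish except the $1$ coming from $v^Tu=1$, because $v^Tw_i=0$. Using the $1$ in position $(0,1)$, I then clear the remaining $\ast$ entries in the second row and column by simultaneous row and column operations, which are congruences. Subtracting multiples of the $e_0$ row and column from the others removes the off-diagonal $\ast$'s. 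The diagonal entry $\ast=u^TBu$ is removed by subtracting $\tfrac12 u^TBu$ times $e_0$ from $(0,u)$. The result is a block-diagonal matrix
\[
\begin{pmatrix}0&1\\1&0\end{pmatrix}\oplus P^TBP.
\]
Since congruence preserves rank and the hyperbolic block has rank $2$, it follows that $\rank(C)=2+\rank(P^TBP)$.

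The main obstacle is making precise what "representing matrix of the intersection" means. The intersection is not a hypersurface in $T$ but a quadric inside the hyperplane $\{f_0=0\}\cong\CC\PP^{d-3}$. Its defining matrix depends on the chosen parametrization $P$, but only up to congruence, so its rank is well defined. I would state this identification explicitly, noting that the torus condition merely removes coordinate hyperplanes and does not affect the quadratic form, before giving the congruence computation above.
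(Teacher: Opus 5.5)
Your proposal is correct and is essentially the paper's argument. The paper parametrizes $\cV(f_0)$ by $\theta_2,\ldots,\theta_{d-1}$, which corresponds to your choice $u=e_1/c_{01}$ with the matching $P$. It identifies the intersection quadric with the Schur complement $Z-Y^TX^{-1}Y$ of $X=\begin{pmatrix}0&c_{01}\\c_{01}&0\end{pmatrix}$, which equals your $P^TBP$, and then cites Guttman rank additivity. Your explicit congruence $C\simeq\begin{pmatrix}0&1\\1&0\end{pmatrix}\oplus P^TBP$ is the coordinate-free proof of that additivity, and it also makes explicit that the rank does not depend on how the hyperplane is parametrized.
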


\begin{proof}
For a fixed scaling $c\in(\CC^*)^{d(d-1)/2}$, we write the scaling matrix $C\in\CC^{d\times d}$ in block~form $$C = \begin{pmatrix} X & Y \\ Y^T & Z \end{pmatrix}, \qquad \textup{where } X = \begin{pmatrix} 0 & c_{01} \\ c_{01} & 0 \end{pmatrix}.$$ Since $c_{01}\neq0$, the matrix $X$ is invertible. Now consider the quadric $$f(\theta)=\theta^TC\theta,  \qquad \theta = (\theta_0, \ldots, \theta_{d-1}) \in \CC\PP^{d-1}.$$ Writing $\theta = (x_1,x_2) \in \CC\PP^1\times\CC\PP^{d-3}$, and since $C$ is symmetric, we obtain $$f(\theta) = x_1^T X x_1 + 2x_1^T Y x_2 + x_2^T Z x_2.$$ The linear system $X x_1+Yx_2=0$ consists of two equations, one for each coordinate of $x_1 = (\theta_0,\theta_1)^T$. Since $X$ is invertible, solving this system yields $x_1 = -X^{-1}Yx_2$. Since $f_1$ does not depend on $\theta_0$, substituting $x_1 = -X^{-1}Yx_2$ into $f(\theta)$ restricts $f_1=0$ to the hyperplane $\cV(f_0)$. This results in a quadric in $x_2$ of the form $$f(-X^{-1}Yx_2, x_2) = x_2^T (Z-Y^TX^{-1}Y)x_2.$$ Thus, $\cV(f_0)\cap\cV(f_1)$ is precisely the quadric defined by the Schur complement $Z-Y^TX^{-1}Y$. By the Guttman rank additivity formula \cite[page 14]{TheSchurComplementAndItsApplications}, since $\rank(X)=2$, we conclude \begin{equation*}\rank(C) = \rank(X) + \rank(Z-Y^TX^{-1}Y) = 2 + \rank(Z-Y^TX^{-1}Y). \qedhere \end{equation*}
\end{proof}

This yields a complete geometric interpretation of the last term in the inclusion-exclusion formula. We illustrate our methodology for the second hypersimplex of order four. Using an alternative approach, its Euler stratification was first described in \cite[Corollary~3.8]{MatroidStratificationOfMLDegreesOfIndependenceModels}.

\begin{example} \label{example:SecondHypersimplexOfOrderFour} \upshape
The principal $A$-determinant associated with $\Delta_{4,2}$ is $\det(C)$, where $$C = \begin{pmatrix} 0 & c_{01} & c_{02} & c_{03} \\ c_{01} & 0 & c_{12} & c_{13} \\ c_{02} & c_{12} & 0 & c_{23} \\ c_{03} & c_{13} & c_{23} & 0 \end{pmatrix}.$$ The defining polynomial is $$f = \theta_0 (\underbrace{c_{01} \theta_1 +  c_{02} \theta_2 + c_{03} \theta_3}_{=:f_0}) + \underbrace{c_{12} \theta_1 \theta_2 + c_{13} \theta_1 \theta_3 + c_{23} \theta_2 \theta_3}_{=:f_1}.$$ By Lemma \ref{lemma:Euler-characteristic-f0}, we have $\chi(\cV(f_0)) = -1$. 

The polynomial $f_1$ defines a smooth conic in $\CC\PP^2$, i.e., a genus-zero curve isomorphic to $\CC\PP^1$. Passing to the dense torus removes the points $[1:0:0]$, $[0:1:0]$, $[0:0:1]$. Hence, $$\chi(\cV(f_1)) = 2-3=-1.$$

By Lemma~\ref{lemma:intersection-rank}, the intersection $\cV(f_0)\cap\cV(f_1)$ is a quadric in $\CC\PP^1$ consisting of two points, counted with multiplicities. Substituting $\theta_3=-1/c_{03}(c_{01}\theta_1+c_{02}\theta_2)$ into $f_1$ yields $$c_{01}c_{13}\theta_1^2+(c_{02}c_{13}+c_{01}c_{23}-c_{03}c_{12})\theta_1\theta_2+c_{02}c_{23}\theta_2^2=0.$$ Its discriminant equals the principal $A$-determinant. Therefore, $$\chi(\cV(f_0)\cap\cV(f_1)) = \begin{cases} 2, & \textup{if } \det(C)\neq0, \\ 1, & \textup{otherwise.} \end{cases}$$

Hence, for generic scalings, \begin{align*}
\mldeg(V_4^c) &= (-1)^3 \left( \chi(\cV(f_0)) + \chi(\cV(f_1)) - \chi(\cV(f_0)\cap\cV(f_1)) \right) \\ &= (-1)^3 (-1+(-1)-2) = 4.
\end{align*}
For non-generic scalings, the ML degree drops accordingly by one.
\end{example}

In general, the Euler characteristic of $\cV(f)$ depends, inter alia, on the rank of its representing matrix. We study this dependence in more detail in the following section.

\section{Quadrics of Rank Three and Four}
\label{sec:QuadricsOfRankThreeAndFour}

We first focus on the scaled toric varieties $V_d^c$ for scaling matrices $C\in\CC^{d\times d}$ of rank three. Equivalently, the corresponding polynomial $f$ is a quadric of rank three. 
This corresponds to scalings for which all factors of the principal $A$-determinant vanish \cite[Theorem~1.1]{ThePrincipalRankCharacteristicSequenceOverVariousFields}. In \cite[Conjecture 3.9]{MatroidStratificationOfMLDegreesOfIndependenceModels}, the authors conjectured that the minimum ML degree of $V_d^c$ over all possible scalings is given by a binomial coefficient. The following theorem shows that this particular ML degree is indeed attained by scaling matrices of rank three.

\begin{theorem} \label{thm:ML-degree-rank-three}
Let $c\in(\CC^*)^{d(d-1)/2}$ be a scaling, for which $C\in\CC^{d\times d}$ has rank three. Then $$\mldeg(V_d^c)=\binom{d-1}{2}.$$
\end{theorem}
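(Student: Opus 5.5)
The plan is to induct on $d$, using the inclusion--exclusion decomposition of $\chi(\cV(f_0f_1))$ from Section~\ref{sec:EulerStratificationsOfQuadrics}. Write $m_d$ for the ML degree in the rank-three case.

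\textbf{Base cases.} For $d=3$ every scaling matrix has $\det C = 2c_{01}c_{02}c_{12}\neq 0$, so it has rank three, and $\cV(f)$ gives $m_3=1=\binom{2}{2}$. For $d=4$, Example~\ref{example:SecondHypersimplexOfOrderFour} with $\det C=0$ gives $m_4=3=\binom{3}{2}$.

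\textbf{Inductive step.} For $d\ge 5$, combine Proposition~\ref{prop:ML-degree-f-polynomial}, Lemma~\ref{lemma:Euler-characteristic-f0} and the inclusion--exclusion formula:
\[
m_d=(-1)^{d-1}\bigl((-1)^{d-1}+\chi(\cV(f_1))-\chi(\cV(f_0)\cap\cV(f_1))\bigr).
\]
The polynomial $f_1$ is the quadric of $\Delta_{d-1,2}$ with scaling matrix $C[\{1,\dots,d-1\}]$. This submatrix has rank at most three. It has rank at least three because any $3\times3$ principal minor equals $2c_{ij}c_{ik}c_{jk}\neq0$. Hence by induction and Proposition~\ref{prop:ML-degree-f-polynomial},
\[
\chi(\cV(f_1))=(-1)^{d-1}\binom{d-2}{2}.
\]
The identity $\binom{d-1}{2}-\binom{d-2}{2}=d-2$ then reduces everything to the claim
\[
\chi(\cV(f_0)\cap\cV(f_1))=(-1)^d(d-3).
\]

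\textbf{Identifying the intersection.} By Lemma~\ref{lemma:intersection-rank}, $\cV(f_0)\cap\cV(f_1)$ is a rank-one quadric inside the hyperplane $\{f_0=0\}\subseteq\CC\PP^{d-2}$. Its defining form is the square of a linear form $\ell$. Set-theoretically it is therefore $L\cap T'$, where $L=\{f_0=\ell=0\}\simeq\CC\PP^{d-4}$ and $T'$ is the torus of $\CC\PP^{d-2}$. So $L\cap T'$ is the complement in $\CC\PP^{d-4}$ of the $d-1$ restricted coordinate hyperplanes. If this arrangement is generic (any $k\le d-3$ of the hyperplanes meet in codimension exactly $k$ inside $L$), then the inclusion--exclusion computation of Lemma~\ref{lemma:Euler-characteristic-f0}, carried out one dimension lower, gives
\[
\chi(L\cap T')=(-1)^{d-4}\binom{d-3}{d-4}=(-1)^d(d-3),
\]
which is exactly what is needed.

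\textbf{Main obstacle: genericity of $L$.} To prove it, I would use the structure of rank-three symmetric matrices with zero diagonal. Write $C=M^TQM$ with $Q$ a nondegenerate $3\times3$ form and columns $v_i\in\CC^3$. The zero diagonal gives $v_i^TQv_i=0$, so each $[v_i]$ lies on the conic $\{Q=0\}\simeq\CC\PP^1$. The nonzero off-diagonal entries force these points to be pairwise distinct and each $v_i\neq0$. Parametrizing the conic, $v_i=\lambda_i(s_i^2,s_it_i,t_i^2)$ with $[s_i:t_i]$ distinct, and after a linear change of coordinates
\[
f=\Bigl(\sum\lambda_i s_i^2\theta_i\Bigr)\Bigl(\sum\lambda_i t_i^2\theta_i\Bigr)-\Bigl(\sum\lambda_i s_it_i\theta_i\Bigr)^2,
\]
so $c_{ij}\propto\lambda_i\lambda_j(s_it_j-s_jt_i)^2$.

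In these coordinates I would compute $\ell$ explicitly and express $L$ as the kernel of a $2\times(d-1)$ matrix whose columns are values of the binary forms $s_0 t-t_0 s$ and $s_0s$ type at the points $[s_i:t_i]$, $i\ge1$, up to the nonzero factors $\lambda_i$. Genericity of the restricted arrangement then amounts to every $2\times2$ minor (equivalently, every pair of columns) of this matrix being independent. This should follow from the points $[s_i:t_i]$ being distinct, by a Vandermonde-type argument on the rational normal curve.

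Verifying this minor condition explicitly, and checking that no coordinate hyperplane contains $L$, is where I expect the real work to lie. If the direct computation becomes messy, my fallback is to apply a torus rescaling $\theta_i\mapsto\lambda_i^{-1}\theta_i$, which does not change Euler characteristics, and a projective change of parameter on $\CC\PP^1$ to normalize $[s_0:t_0]=[1:0]$ before computing.
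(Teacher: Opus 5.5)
Your proposal is correct and follows the paper's proof. It uses induction on $d$ through the same inclusion--exclusion split, with $\chi(\cV(f_1))$ supplied by the inductive hypothesis. Your check that $C[\{1,\dots,d-1\}]$ again has rank three, via the $3\times 3$ minors $2c_{ij}c_{ik}c_{jk}\neq 0$, makes explicit what the paper uses tacitly. The intersection $\cV(f_0)\cap\cV(f_1)$ is identified via Lemma~\ref{lemma:intersection-rank} as a linear space, and its torus part is computed as in Lemma~\ref{lemma:Euler-characteristic-f0}. The genericity of the restricted coordinate arrangement, which the paper leaves implicit in ``arguing exactly as in'' that lemma, is settled immediately by your normalization $[s_0:t_0]=[1:0]$ and $t_j=1$ for $j\ge 1$. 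Then $f_0\propto\sum_{j\ge1}\lambda_j\theta_j$ and $\ell=\sum_{j\ge1}\lambda_j s_j\theta_j$, so the columns $(\lambda_j,\lambda_j s_j)$ are pairwise independent because the $s_j$ are distinct.
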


\begin{proof}
We prove this by induction on $d$. For $d=4$, the statement follows from Example~\ref{example:SecondHypersimplexOfOrderFour}. Now consider $V_{d+1}^c$ and assume that $c\in(\CC^*)^{(d+1)d/2}$ is a scaling for which all factors of the principal $A$-determinant vanish. By Section \ref{sec:EulerStratificationsOfQuadrics}, we compute the ML degree of $V_{d+1}^c$ via $$\mldeg(V_{d+1}^c) = (-1)^d ( \chi(\cV(f_0)) + \chi(\cV(f_1)) - \chi(\cV(f_0)\cap\cV(f_1))).$$ 

According to Lemma~\ref{lemma:Euler-characteristic-f0}, we have $\chi(\cV(f_0)) = (-1)^d$. Moreover, Proposition \ref{prop:ML-degree-f-polynomial} together with the inductive hypothesis yields $$\chi(\cV(f_1)) = (-1)^d \binom{d-1}{2}.$$

It remains to compute $\cV(f_0)\cap\cV(f_1)$. By Lemma \ref{lemma:intersection-rank}, this is a quadric in $\CC\PP^{d-2}$ of rank one. Hence, $\cV(f_0)\cap\cV(f_1)$ coincides with a hyperplane $H\simeq\CC\PP^{d-3}$, and $\chi(H)=\chi(\CC\PP^{d-3})=d-2$. Passing to the dense torus removes the coordinate hyperplane sections of $H$. Arguing exactly as in the proof of Lemma \ref{lemma:Euler-characteristic-f0}, inclusion-exclusion yields $$\chi(\cV(f_0)\cap\cV(f_1)) = (-1)^{d+1}(d-2).$$ Consequently, 
\begin{align*}
\mldeg(V_{d+1}^c) &= (-1)^d \left((-1)^d+(-1)^d\binom{d-1}{2}-(-1)^{d+1}(d-2)\right) \\ 
&=  \binom{d-1}{2}+\binom{d-1}{1}=\binom{d}{2}. \qedhere
\end{align*}
\end{proof}

The following lemma quarantees the existence of a scaling for which the polynomial associated with $\Delta_{d,2}$ is a quadric of rank three. Consequently, the ML degree determined in Theorem~\ref{thm:ML-degree-rank-three} is indeed realizable for suitably scaled toric varieties defined by the second hypersimplex of arbitrary order. The proof provides an explicit construction of such a scaling.

\begin{proposition} \label{prop:rank-three-scaling}
There exists a scaling $c\in(\CC^*)^{d(d-1)/2}$ such that $C\in\CC^{d\times d}$ has rank three.
\end{proposition}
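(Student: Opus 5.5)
We need a symmetric $d\times d$ matrix $C$ with zero diagonal, all off-diagonal entries nonzero, and rank exactly three. The plan is to write $C$ as a sum of rank-one symmetric matrices so that the diagonal vanishes automatically.

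The natural construction uses the quadratic form $q(x,y,z)=xz-y^2$ of rank three, whose isotropic vectors are $(s^2,st,t^2)$. Choose $d$ pairwise non-proportional points $v_i=(s_i^2,s_it_i,t_i^2)$ on the conic $\{q=0\}$, for example $v_i=(1,\lambda_i,\lambda_i^2)$ with distinct $\lambda_i\in\CC$. Let $B$ be the $3\times 3$ symmetric matrix of the bilinear form associated with $q$, and set $C=VBV^T$, where $V\in\CC^{d\times 3}$ has rows $v_i$. Then
\[
C_{ij}=v_i^TBv_j ,
\]
and a short computation gives
\[
C_{ij}=\tfrac12\bigl(s_i^2t_j^2+s_j^2t_i^2\bigr)-s_is_jt_it_j=\tfrac12\,(s_it_j-s_jt_i)^2 .
\]
With $s_i=1$ and $t_i=\lambda_i$ this becomes $C_{ij}=\tfrac12(\lambda_j-\lambda_i)^2$. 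Hence $C_{ii}=0$, and for $i\ne j$ we have $C_{ij}\neq0$ because the $\lambda_i$ are distinct. So the scaling $c_{ij}=(\lambda_i-\lambda_j)^2$ (scaling by $2$ is harmless) lies in $(\CC^*)^{d(d-1)/2}$.

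For the rank, $C=VBV^T$ gives $\rank C\le 3$. For equality, assuming $d\ge3$ (needed anyway for rank three), the rows $(1,\lambda_i,\lambda_i^2)$ form a Vandermonde matrix, so $V$ has rank three. Since $B$ is invertible, $\rank(VBV^T)=3$: restrict to three rows with invertible $V_0$, so the principal $3\times3$ block is $V_0BV_0^T$, which is invertible.

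One can also check this directly. For distinct $a,b,c$, the $3\times3$ principal minor with entries $(a-b)^2$ etc. equals $2(a-b)^2(b-c)^2(c-a)^2\neq0$.

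The main obstacle is only finding a clean explicit family; the Vandermonde identity $(\lambda_i-\lambda_j)^2=\lambda_i^2-2\lambda_i\lambda_j+\lambda_j^2$ expresses $C$ as a combination of the three vectors $\mathbf 1$, $\lambda$, $\lambda^{2}$, which gives rank at most three immediately. The remaining work is routine.
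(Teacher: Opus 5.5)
Your proof is correct and uses the same idea as the paper: realize $C$ as the Gram matrix $VBV^T$ of $d$ isotropic vectors of a nondegenerate ternary quadratic form, with $V$ of rank three. The only difference is the parametrization of the conic. The paper takes rows $(1,i,\sqrt{1+i^2})$ for $x^2+y^2-z^2$, whereas your rational parametrization $(1,\lambda_i,\lambda_i^2)$ of $xz=y^2$ gives the closed form $c_{ij}=(\lambda_i-\lambda_j)^2$, which makes the nonvanishing of the off-diagonal entries and the lower bound $\rank C\ge 3$ (left implicit in the paper) immediate.
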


\begin{proof}
We construct a symmetric $d\times d$ matrix of rank three with zero diagonal and nonzero off-diagonal entries. Let $X\in\RR^{d\times3}$ be a matrix with entries $x_{i1}=1$, $x_{i2}=i$ and $x_{i3}=\sqrt{1+i^2}$ for all $i\in[d]$. Then, for every $i\in[d]$, $$x_{i1}^2+x_{i2}^2-x_{i3}^2 = 1+i^2-(1+i^2) = 0.$$ Moreover, for $i\neq j$, $$x_{i1}x_{j1}+x_{i2}x_{j2}-x_{i3}x_{j3} = 1+ij-\sqrt{1+i^2}\sqrt{1+j^2} \neq 0.$$ Here, we note that the equality $1+ij = \sqrt{1+i^2}\sqrt{1+j^2}$ holds if and only if $i=j$. 

To verify that $\textup{rank}(X)=3$, we consider the following equation componentwise: $$\lambda_1 (1, \ldots, 1)^T + \lambda_2 (1, \ldots, d)^T + \lambda_3 (\sqrt{1+1^2}, \ldots, \sqrt{1+d^2})^T = 0.$$ If $\lambda_3\neq0$, this would imply that $\sqrt{1+i^2}$ is an affine function of $i$, which is impossible. Hence $\lambda_3=0$, and then $\lambda_1+\lambda_2\cdot i=0$ for all $i\in[d]$ implies $\lambda_1=\lambda_2=0$. Therefore, $\textup{rank}(X)=3$.

Now let $J=\textup{diag}(1,1,-1)$, and define $C\coloneqq XJX^T$. Then $C$ is symmetric, and $$\textup{rank}(C)=\textup{rank}(XJX^T)\le \min\{\textup{rank}(X), \textup{rank}(J), \textup{rank}(X^T) \}=3.$$ Consequently, all principal $k$-minors of $C$ vanish for $k\ge4$. The diagonal entries of $C$ are $$C_{ii} = x_{i1}^2+x_{i2}^2-x_{i3}^2=0,$$ while for $i\neq j$, $$C_{ij} = x_{i1}x_{j1}+x_{i2}x_{j2}-x_{i3}x_{j3}\neq0.$$ Thus, $C$ is a symmetric matrix of rank three with zero diagonal and non-zero off-diagonal, i.e., its entries define a scaling for which all factors of the principal $A$-determinant vanish.
\end{proof}

Proposition~\ref{prop:rank-three-scaling} combined with Theorem~\ref{thm:ML-degree-rank-three} completes the proof of Theorem~\ref{thm:main-rank-three}. For the investigations in Section \ref{sec:SecondHypersimplexOfOrderFiveAndSix}, we derive an analogous formula for scaling matrices of rank four.

\begin{theorem} \label{thm:rank-four-ML-degree}
Let $C\in\CC^{d\times d}$ be a scaling matrix of rank four. Then $$\mldeg(V_d^c) = \binom{d}{3} + \sum_{i=4}^{d-1} (-1)^{i-1} C_{3,i},$$ where $C_{3,i}$ denotes the number of rank-three principal $i$-submatrices of $C$.
\end{theorem}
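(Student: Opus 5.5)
The plan is to prove the formula by induction on $d$, using the inclusion–exclusion decomposition of Section~\ref{sec:EulerStratificationsOfQuadrics}:
\[
\mldeg(V_d^c)=(-1)^{d-1}\bigl(\chi(\cV(f_0))+\chi(\cV(f_1))-\chi(\cV(f_0)\cap\cV(f_1))\bigr).
\]
The base case is $d=4$. There $\det C\neq 0$, and Example~\ref{example:SecondHypersimplexOfOrderFour} gives $\mldeg=4=\binom{4}{3}$, with an empty sum.

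For the inductive step there are three terms to handle.

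First, Lemma~\ref{lemma:Euler-characteristic-f0} gives $\chi(\cV(f_0))=(-1)^{d-1}$.

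Second, $f_1$ is the quadric attached to the principal submatrix $C'=C[\{1,\dots,d-1\}]$. This submatrix has rank three or four. It cannot have rank at most two: any $3\times3$ principal submatrix with zero diagonal and nonzero off-diagonal entries $a,b,c$ has determinant $2abc\neq0$.
\begin{itemize}
\item If $\rank C'=3$, Theorem~\ref{thm:ML-degree-rank-three} together with Proposition~\ref{prop:ML-degree-f-polynomial} gives $\chi(\cV(f_1))=(-1)^{d-2}\binom{d-2}{2}$.
\item If $\rank C'=4$, the inductive hypothesis applies to $C'$. It gives $\chi(\cV(f_1))$ in terms of $\binom{d-1}{3}$ and the counts of rank-three principal submatrices of $C'$.
\end{itemize}
Since the principal submatrices of $C'$ are exactly the principal submatrices of $C$ avoiding the index $0$, the numbers $C_{3,i}$ for $C$ split into two parts: those for $C'$, and those for submatrices containing $0$. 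The bookkeeping should show that the submatrices containing $0$ are accounted for by the third term.

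Third, by Lemma~\ref{lemma:intersection-rank}, $\cV(f_0)\cap\cV(f_1)$ is a rank-two quadric inside the hyperplane $H\simeq\CC\PP^{d-3}$ cut out by $f_0$. That is, it is a union of two hyperplanes $L_1\cup L_2$ of $H$, intersected with the torus. I would compute
\[
\chi = \chi(L_1\cap T)+\chi(L_2\cap T)-\chi(L_1\cap L_2\cap T),
\]
evaluating each term by the inclusion–exclusion over coordinate sections used in Lemma~\ref{lemma:Euler-characteristic-f0} and in the proof of Theorem~\ref{thm:ML-degree-rank-three}.

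The crucial point is to relate the degenerate coordinate sections to rank drops. When some coordinate hyperplane section of the Schur complement quadric drops rank, the corresponding principal submatrix of $C$ containing $\{0,1\}$ has rank three. This follows by applying the Guttman formula of Lemma~\ref{lemma:intersection-rank} to principal submatrices. Each such event changes the Euler characteristic by $\pm1$ relative to the generic count.

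Summing the three contributions should give $\binom{d}{3}=\binom{d-1}{3}+\binom{d-1}{2}$ as the generic part. The correction terms should assemble into $\sum_{i=4}^{d-1}(-1)^{i-1}C_{3,i}$, with the two subcases $\rank C'=3$ or $4$ merging because $C'$ itself then counts as a rank-three $(d-1)$-submatrix.

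The main obstacle is the third term. The Euler characteristic of a linear space intersected with the torus is not always $\pm1$; it depends on the matroid of the restricted coordinates. So I must carefully show that, for a rank-two quadric arising as a Schur complement, every non-generic coordinate configuration corresponds exactly to a rank-three principal submatrix, and contributes with the sign $(-1)^{i-1}$. I would first check this explicitly for $d=5,6$ to calibrate the signs, then carry out the general sign bookkeeping by an inner induction on the number of coordinates removed.
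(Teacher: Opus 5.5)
Your architecture matches the paper's proof: induction on $d$, the three-term inclusion-exclusion, the rank-three/rank-four split for $C'$, and the idea that rank-three principal submatrices avoiding index $0$ enter through $f_1$ while those containing $0$ appear as rank drops in coordinate sections of $\cV(f_0)\cap\cV(f_1)$. The gap is in how you set up that last correspondence.

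The torus condition is $\theta_1\cdots\theta_{d-1}\neq 0$. On $H=\{f_0=0\}$, the coordinate $\theta_1$ is a linear form in the Schur-complement coordinates $\theta_2,\dots,\theta_{d-1}$. So the sections with $\theta_1=0$ must also be removed, and they correspond to principal submatrices that contain $0$ but not $1$. If you track only submatrices containing $\{0,1\}$, those rank-three submatrices are counted nowhere, and the bookkeeping against $C_{3,i}$ cannot close.

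The missing observation is the following. For every $S\subseteq[d-1]$, setting $\theta_S=0$ turns $f_0,f_1$ into the corresponding polynomials of the principal submatrix $C[\{0\}\cup([d-1]\setminus S)]$. Lemma~\ref{lemma:intersection-rank}, applied to that submatrix with pivot $\{0,k\}$ for any $k\notin S$, then shows that the projective section is a quadric in $\CC\PP^{d-|S|-3}$. Its rank is the rank of that submatrix minus two, hence two or one.

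The obstacle you anticipate comes only from splitting the quadric into $L_1\cup L_2$; the paper never splits it. A rank-two quadric in $\CC\PP^n$ has Euler characteristic $n+1$ and a rank-one quadric has $n$, regardless of position. Rank zero cannot occur, since every $3\times3$ principal submatrix is nonsingular. Inclusion-exclusion over $S$ therefore gives directly
\[
\chi(\cV(f_0)\cap\cV(f_1)) = (d-2)+\sum_{i=1}^{d-4}(-1)^i(d-i-2)\binom{d-1}{i}+\sum_{i=1}^{d-4}(-1)^{i-1}C^2_{3,d-i},
\]
where $C^2_{3,m}$ counts the rank-three principal $m$-submatrices containing $0$. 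The rest is binomial algebra, and no matroid analysis is needed. Your split can be made to work, since no section can lie inside a single $L_j$, but it is a detour.

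Two smaller corrections:
\begin{itemize}
\item By Proposition~\ref{prop:ML-degree-f-polynomial}, $\chi(\cV(f_1))=(-1)^{d-1}\mldeg(V_{d-1}^{c'})$. The rank-three subcase therefore gives $(-1)^{d-1}\binom{d-2}{2}$, not $(-1)^{d-2}\binom{d-2}{2}$; compare the value $-1$ for $d=4$ in Example~\ref{example:SecondHypersimplexOfOrderFour}.
\item Merging the two subcases needs the identity $\binom{d-2}{2}=\binom{d-1}{3}+\sum_{i=4}^{d-1}(-1)^{i-1}\binom{d-1}{i}$. When $\rank C'=3$, every principal submatrix of $C'$ of size at least four has rank three, not only $C'$ itself.
\end{itemize}
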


\begin{proof}
We prove this by induction on $d$. For $d=4$, the sum is empty, so the claim follows immediately from Example \ref{example:SecondHypersimplexOfOrderFour}. Now assume that the statement holds for all rank-four scaling matrices of size at most $d-1$, and let $C\in\CC^{d\times d}$ be a scaling matrix of rank four. 

By Lemma \ref{lemma:Euler-characteristic-f0}, we have $\chi(\cV(f_0)) = (-1)^{d-1}$. 

Let $X = \{ 2, \ldots, d \}$, and consider the principal submatrix $C[X]$ with associated scaling~$c'$. If $C[X]$ has rank four, then by the inductive hypothesis and Proposition \ref{prop:ML-degree-f-polynomial}, $$\chi(\cV(f_1)) = (-1)^{d-1} \mldeg(V_{d-1}^{c'}) = (-1)^{d-1} \left[ \binom{d-1}{3} + \sum_{i=4}^{d-1} (-1)^{i-1} C_{3,i}^1 \right],$$ where $C_{3,i}^1$ denotes the number of rank-three principal $i$-submatrices of $C[X]$, with $C_{3,d-1}^1=0$.

If instead $C[X]$ has rank three, then by Theorem \ref{thm:ML-degree-rank-three} and Proposition \ref{prop:ML-degree-f-polynomial}, $$\chi(\cV(f_1)) = (-1)^{d-1} \mldeg(V_{d-1}^{c'}) = (-1)^{d-1} \binom{d-1}{2}.$$ In this case, every principal submatrix of size $i\ge 4$ has rank three, i.e., $C_{3,i}^1 = \binom{d-1}{i}$. Since $$\binom{d-2}{2} = \binom{d-1}{3} + \sum_{i=4}^{d-1} (-1)^{i-1} \binom{d-1}{i},$$ we again obtain $$\chi(\cV(f_1)) = (-1)^{d-1} \left[ \binom{d-1}{3} + \sum_{i=4}^{d-1} (-1)^{i-1} C_{3,i}^1 \right].$$ 

Next consider $\cV(f_0)\cap\cV(f_1)$. Since $C$ has rank four, this is a rank-two quadric in $\CC\PP^{d-3}$, whose Euler characteristic equals $d-2$. Passing to the dense torus yields $$\chi(\cV(f_0)\cap\cV(f_1)) = (d-2) + \sum_{i=1}^{d-4} (-1)^i (d-i-2) \left( \binom{d}{d-i} - \binom{d-1}{d-i} \right) + \sum_{i=1}^{d-4} (-1)^{i-1} C_{3,d-i}^2,$$ where $C_{3,d-i}^2 \coloneqq C_{3,d-i} - C_{3,d-i}^1$.

We now compute $$\mldeg(V_d^c) = (-1)^{d-1} \left(\chi(\cV(f_0)) + \chi(\cV(f_1)) - \chi(\cV(f_0)\cap\cV(f_1))\right).$$ 

The contribution of the terms involving rank-three principal submatrices is $$(-1)^{d-1} \sum_{i=4}^{d-1} (-1)^{i-1} C_{3,i}^1 -  \sum_{i=1}^{d-4} (-1)^{i-1} C_{3,d-i}^2.$$ Reindexing the second sum via $j=d-i$ and using $(-1)^{d+i-2} + (-1)^{d-i-1} = 0$ together with $C_{3,i}^1 = C_{3,i}-C_{3,i}^2$, this simplifies to $\sum_{i=4}^{d-1} (-1)^{d+i-2} C_{3,i}$.

For the constant term, we obtain $$(-1)^{d-1} + (-1)^{d-1} \binom{d-1}{3} - (d-2) - \sum_{i=1}^{d-4} (-1)^i (d-i-2) \binom{d-1}{i}.$$ Using the identities $\sum_{i=0}^n (-1)^i \binom{n}{i} = 0$ and $\sum_{i=0}^n (-1)^i i \binom{n}{i} = 0$, this simplifies to $(-1)^{d-1} \binom{d}{3}$.

Hence, combining the previous computations yields \begin{equation*}\mldeg(V_d^c) = (-1)^{d-1} \left( (-1)^{d-1} \binom{d}{3} + \sum_{i=4}^{d-1} (-1)^{d+i-2} C_{3,i} \right) = \binom{d}{3} + \sum_{i=4}^{d-1} (-1)^{i-1} C_{3,i}. \qedhere \end{equation*}
\end{proof}

\section{Delta-matroids from Symmetric Matrices}
\label{sec:DeltaMatroidsFromSymmetricMatrices}

Let $C$ be a symmetric matrix over a field, whose rows and columns are indexed by $[d]$. For $X\subseteq[d]$, let $C[X]$ denote the principal submatrix indexed by $X$. We define $$\cF\coloneqq\{ X \subseteq [d] \mid \det C[X] \neq 0 \}.$$ Note that $\cF$ is nonempty since $\det C[\emptyset] = 1$. For feasible sets $X,Y\in\cF$, their symmetric difference is defined as $$X\Delta Y \coloneqq (X\setminus Y)\cup (Y\setminus X).$$ According to \cite[Theorem 4.1]{RepresentabilityOfDeltaMatroids}, the pair $([d],\cF)$ forms a delta-matroid, that is, it satisfies the symmetric exchange axiom.

\begin{symexchange*}
    For every $X,Y\in\cF$ and every $x\in X\Delta Y$ there exists $y\in X\Delta Y$ such that $X\Delta\{x,y\}\in\cF$.
\end{symexchange*}

We outline a proof based on \emph{principal pivot transforms} \cite[Section~2.4]{GammaGraphicDeltaMatroidsAndTheirApplications}.
Assume $C$ is written in block form $$C = \begin{pmatrix} C[X] & B \\ B^T & C[[d]\setminus X] \end{pmatrix}.$$ If $X\in\cF$, i.e., $C[X]$ is invertible, the principal pivot transform of $C$ on $X$ is defined as $$C\star X = \begin{pmatrix} C[X]^{-1} & C[X]^{-1} B \\ -B^T C[X]^{-1} & C[[d]\setminus X]-B^TC[X]^{-1}B \end{pmatrix}.$$

\begin{lemma} \label{lemma:transform-determinant}
If $X\in\mathcal{F}$, then for all $Y\subseteq [d]$, $\det C\star X[Y] = 0$ if and only if $\det C[X\Delta Y] = 0$.
\end{lemma}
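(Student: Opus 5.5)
The plan is to prove the stronger, quantitative identity of Tucker:
\[
\det (C\star X)[Y] \;=\; \pm\,\frac{\det C[X\Delta Y]}{\det C[X]} \qquad\text{for all } Y\subseteq[d],
\]
from which the stated equivalence follows immediately, since $\det C[X]\neq0$ because $X\in\cF$. Write $\bar X=[d]\setminus X$ and split $Y=Y_1\cup Y_2$ with $Y_1=Y\cap X$ and $Y_2=Y\cap\bar X$. Then $X\Delta Y=(X\setminus Y_1)\cup Y_2$.

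The main tool is a ``graph'' characterization of the pivot. Consider the linear relation $u=Cv$, split along $X$ and $\bar X$ as
\[
u_X=C[X]v_X+Bv_{\bar X},\qquad u_{\bar X}=B^Tv_X+C[\bar X]v_{\bar X}.
\]
Solving the first equation for $v_X$ and substituting into the second gives
\[
\begin{pmatrix} v_X\\ u_{\bar X}\end{pmatrix}=M\begin{pmatrix} u_X\\ v_{\bar X}\end{pmatrix},
\]
where $M$ is exactly $C\star X$ up to the sign conventions in the off-diagonal blocks. Those sign conventions only flip the signs of certain rows or columns and conjugate by a diagonal $\pm1$ matrix, so they do not affect whether a principal minor vanishes. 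So $C\star X$ swaps the roles of the variables $u_i$ and $v_i$ for $i\in X$.

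The key step is a block computation. Take the square submatrix $N=C[\,(X\setminus Y_1)\cup Y_2\,]$ and relate $\det N$ to $\det C[X]$ times $\det (C\star X)[Y]$. One route is via Schur complements. Let $X'=X\setminus Y_1$. Since $C[X]$ is invertible, the Schur complement formula gives
\[
\det C[X\cup Y_2]=\det C[X]\cdot\det\bigl(C[Y_2]-B_{Y_2}^TC[X]^{-1}B_{Y_2}\bigr),
\]
which settles the case $Y_1=\emptyset$: it is exactly the lower-right block of $C\star X$ restricted to $Y_2$. For general $Y_1$, I would reduce to this case by noting that pivots compose, $(C\star X)\star Y_1=C\star (X\setminus Y_1)$ whenever the relevant minors are nonzero. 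The cleaner route, avoiding the nonzero hypotheses, is a direct argument: the principal minor of $C\star X$ on $Y$ vanishes iff there is a nonzero vector, supported on $Y$, whose image under $M$ vanishes on $Y$. Translated through the relation $u=Cv$, this is a pair $(u,v)$ with $u=Cv$, $u_i=0$ for $i\notin Y\cap X$ in $X$, and so on. Matching the supports shows this is equivalent to a nonzero $v$ supported on $X\Delta Y$ with $(Cv)$ vanishing on $X\Delta Y$, i.e.\ $\det C[X\Delta Y]=0$. Invertibility of $C[X]$ is what guarantees that the correspondence $(u,v)\leftrightarrow$ vectors for $M$ is bijective, in particular that nonzero maps to nonzero.

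I expect the main obstacle to be the bookkeeping in this kernel correspondence: getting the support conditions right on the four index classes $X\cap Y$, $X\setminus Y$, $Y\setminus X$, and the complement, and checking nonzeroness. Concretely, I would write down the kernel condition for $M[Y]$ as: find $w$ supported on $Y$ with $(Mw)_Y=0$. Set $(u_X,v_{\bar X})=w$ and $(v_X,u_{\bar X})=Mw$. Then read off that $v$ is supported on $(X\setminus Y)\cup(Y\setminus X)=X\Delta Y$ and that $u=Cv$ vanishes on $X\Delta Y$. It remains to argue that $v\neq0$. If $v=0$ then $u=0$, so $w=0$.
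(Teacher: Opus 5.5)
Your kernel-correspondence argument is correct, but it takes a different route from the paper. The paper gives no argument of its own: it cites Tucker's identity $\det (C\star X)[Y]=\pm\det C[X\Delta Y]/\det C[X]$ from Bouchet, Cottle--Pang--Stone and Geelen. You instead use the exchange description of the pivot. For $u=Cv$, your matrix $M$ sends $(u_X,v_{\bar X})$ to $(v_X,u_{\bar X})$, and $M$ equals $C\star X$ after conjugation by $\mathrm{diag}(-I_X,I_{\bar X})$, so all principal minors agree. The conditions ``$w$ supported on $Y$, $(Mw)_Y=0$'' then translate exactly into ``$v$ supported on $X\Delta Y$, $(Cv)_{X\Delta Y}=0$''. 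Write out the converse nonvanishing instead of just asserting bijectivity: if $w=0$ then $u_X=0$ and $v_{\bar X}=0$, hence $v_X=C[X]^{-1}(u_X-Bv_{\bar X})=0$. This is exactly where $X\in\cF$ enters. As for what each approach buys: your route proves only the vanishing equivalence, not the quantitative identity announced in your first sentence, whereas the cited identity gives the actual value of the minor. As you observed yourself, the pivot-composition route would need extra nonsingularity hypotheses (e.g.\ $C[X\setminus Y_1]$ can be singular), so drop it. In exchange, your argument is self-contained and never uses the symmetry of $C$. It also gives slightly more than the lemma: $w\mapsto v$ is a linear isomorphism from $\ker M[Y]$ onto $\ker C[X\Delta Y]$, so $(C\star X)[Y]$ and $C[X\Delta Y]$ have the same nullity. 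The vanishing equivalence is all that the subsequent delta-matroid theorem uses.
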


\begin{proof}
This follows from \cite[Property 4.3.1]{RepresentabilityOfDeltaMatroids}, see also \cite[\S 4 Theorem~4.1.2]{TheLinearComplementarityProblem} and \cite[Theorem~2.1]{AGeneralizationOfTuttesCharacterizationOfTotallyUnimodularMatrices}.
\end{proof}

The above lemma is the main tool for the proof of Bouchet's result \cite{RepresentabilityOfDeltaMatroids}.

\begin{theorem} \label{thm:symmetric-matrix-delta-matroid}
If $C$ is a symmetric matrix of size $d$, then $([d],\mathcal{F})$ is a delta-matroid.
\end{theorem}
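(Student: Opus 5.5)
The plan is to verify the symmetric exchange axiom directly, using the principal pivot transform to reduce to the case $X=\emptyset$. Fix $X,Y\in\cF$ and $x\in X\Delta Y$. Since $X\in\cF$, the matrix $C[X]$ is invertible and $C\star X$ is defined. Set $C'\coloneqq C\star X$ and $Y'\coloneqq X\Delta Y$. By Lemma~\ref{lemma:transform-determinant}, for every $Z\subseteq[d]$ we have $\det C'[Z]\neq0$ if and only if $X\Delta Z\in\cF$. In particular, $\det C'[Y']\neq 0$ because $X\Delta Y'=Y\in\cF$. Moreover, the desired conclusion $X\Delta\{x,y\}\in\cF$ is equivalent to $\det C'[\{x,y\}]\neq0$. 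Thus it suffices to show the following: if $\det C'[Y']\neq0$ and $x\in Y'$, then there exists $y\in Y'$ such that the principal submatrix $C'[\{x,y\}]$ (which is $1\times1$ when $y=x$) is nonsingular.

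Note that $C'$ is in general not symmetric: its off-diagonal blocks are $C[X]^{-1}B$ and $-B^TC[X]^{-1}$. However, it is skew-symmetric in its off-diagonal blocks, and both diagonal blocks are symmetric, since the inverse and the Schur complement of a symmetric matrix are symmetric. I will record this structure explicitly. For $u\ne v$, it gives $C'_{uv}=\pm C'_{vu}$.

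Now argue by contradiction. Suppose no such $y$ exists. Taking $y=x$ gives $C'_{xx}=0$. For every $y\in Y'\setminus\{x\}$, we get $C'_{xx}C'_{yy}-C'_{xy}C'_{yx}=0$, hence $C'_{xy}C'_{yx}=0$. Since $C'_{xy}=\pm C'_{yx}$, this forces $C'_{xy}=C'_{yx}=0$. Consequently the row of $C'[Y']$ indexed by $x$ is identically zero, so $\det C'[Y']=0$, contradicting the choice of $Y'$. Hence some $y\in Y'=X\Delta Y$ has $\det C'[\{x,y\}]\ne0$, that is, $X\Delta\{x,y\}\in\cF$. Together with $\emptyset\in\cF$, this shows $([d],\cF)$ is a delta-matroid.

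The main obstacle is the step that deduces $C'_{xy}=0$ from $C'_{xy}C'_{yx}=0$. This requires the sign-symmetry of $C\star X$, so I will check that block computation carefully. In particular, the transpose of $C[X]^{-1}B$ is $B^TC[X]^{-1}$ because $C[X]^{-1}$ is symmetric. Within a single block the relation is genuine symmetry, and across the two blocks it is antisymmetry. Either way, $C'_{xy}C'_{yx}=\pm (C'_{xy})^2$, so the product vanishes only when the entry itself does. Everything else is a direct application of Lemma~\ref{lemma:transform-determinant}.
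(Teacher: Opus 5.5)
Your proof is correct and follows the paper's argument: pivot on $X$, use Lemma~\ref{lemma:transform-determinant} to reduce to finding a nonsingular $1\times1$ or $2\times2$ principal submatrix of $C\star X$ containing $x$ inside $X\Delta Y$, and obtain it from the nonzero row indexed by $x$ (you argue by contradiction, the paper by two cases). Your remark that $C\star X$ is only sign-symmetric, $c'_{yx}=\pm c'_{xy}$, is more accurate than the paper's claim that the pivot transform of a symmetric matrix is symmetric, which fails for the definition the paper states. The argument still goes through, since it only needs $c'_{xy}c'_{yx}=\pm(c'_{xy})^2$.
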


\begin{proof}
We show that $\cF$ satisfies the symmetric exchange axiom. 

Let $X,Y\in\mathcal{F}$ and let $x\in X\Delta Y$. By Lemma \ref{lemma:transform-determinant}, we have $\det C\star X[X\Delta Y]\neq 0$, since $$X\Delta(X\Delta Y) = (X\Delta X)\Delta Y = \emptyset\Delta Y = Y \in\mathcal{F}.$$ In the following, we show that there exists $y\in X\Delta Y$ such that $\det C\star X[\{x,y\}]\neq 0$. Then, by Lemma \ref{lemma:transform-determinant}, it follows that $X\Delta\{x,y\}\in\mathcal{F}$. We distinguish between two cases. \medskip

\textbf{\textit{Case 1.}} If $\det C\star X[\{x\}]\neq0$, then $X\Delta\{x\}\in\mathcal{F}$ by Lemma \ref{lemma:transform-determinant}. Taking $y=x$, we have $$X\Delta\{x,y\}=X\Delta\{x\}\in\mathcal{F}.$$ 

\textbf{\textit{Case 2.}} If $\det C\star X[\{x\}]=0$, let $c_{xy}'$ denote the $(x,y)$-entry of $C\star X$. By assumption, $c_{xx}'=0$. Since $C\star X[X\Delta Y]$ is non-singular and $x\in X\Delta Y$, the row of $C\star X[X\Delta Y]$ indexed by $x$ cannot be a zero row. Therefore, there exists $y\in X\Delta Y\setminus\{x\}$ such that $c_{xy}'\neq0$. By \cite[Theorem 7.4]{DeltaMatroidsForGraphTheorists}, the principal pivot transform of a symmetric matrix is symmetric.~Hence, \begin{equation*} \det C\star X[\{x,y\}] = \det \begin{pmatrix} c_{xx}' & c_{xy}' \\ c_{xy}' & c_{yy}'\end{pmatrix} = \det \begin{pmatrix} 0 & c_{xy}' \\ c_{xy}' & c_{yy}'\end{pmatrix} = - (c_{xy}')^2 \neq 0. \qedhere \end{equation*} 
\end{proof}

We will use the contrapositive of Theorem~\ref{thm:symmetric-matrix-delta-matroid}, that is, if $\cF\subseteq 2^{[d]}$ does not form a delta-matroid, then it cannot arise from the non-singular principal submatrices of a symmetric matrix. We particularly apply this principle to rule out certain scaling matrices.

\section{Second Hypersimplices of Order Five and Six}
\label{sec:SecondHypersimplexOfOrderFiveAndSix}

We first investigate the second hypersimplex of order five. For generic scalings $c\in(\CC^*)^{10}$, we have $\mldeg(V_5^c) = 11$. However, according to Section~\ref{sec:QuadricsOfRankThreeAndFour}, the ML degree can be reduced to six by choosing a suitable scaling. We now show that six is indeed minimal.

\begin{proposition} \label{prop:stratification-second-hypersimplex-of-order-five}
The minimum ML degree of $V_5^c$ that can be achieved by choosing a suitable scaling $c\in(\CC^*)^{10}$ is six.
\end{proposition}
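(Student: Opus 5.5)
Realizability of six is already settled: by Proposition~\ref{prop:rank-three-scaling} there is a scaling with $\rank(C)=3$, and Theorem~\ref{thm:ML-degree-rank-three} then gives $\mldeg(V_5^c)=\binom{4}{2}=6$. What remains is the lower bound $\mldeg(V_5^c)\ge 6$ for every scaling $c\in(\CC^*)^{10}$. I would split by $\rank(C)$. Since $C$ has zero diagonal and nonzero off-diagonal entries, $\rank(C)\ge 3$: any $3\times 3$ principal submatrix $\begin{pmatrix}0&a&b\\a&0&e\\b&e&0\end{pmatrix}$ has determinant $2abe\neq0$. So $\rank(C)\in\{3,4,5\}$.

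For $\rank(C)=3$ the value is exactly $6$. For $\rank(C)=4$, Theorem~\ref{thm:rank-four-ML-degree} with $d=5$ gives $\mldeg(V_5^c)=\binom{5}{3}-C_{3,4}=10-C_{3,4}$, where $C_{3,4}$ counts singular $4\times4$ principal submatrices. I must show $C_{3,4}\le 4$, i.e.\ a rank-four matrix of this shape cannot have all five $4\times4$ principal minors zero. This is where delta-matroids enter. In $\cF$ the sets $\emptyset$, all $2$-sets and all $3$-sets are feasible, singletons are not, and $[5]\notin\cF$ because $\rank(C)=4$. If no $4$-set were feasible, then $\rank(C)=4$ would still force some $4\times4$ minor (not necessarily principal) to be nonzero. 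The cleaner route is the standard fact that for a symmetric matrix the rank equals the maximal size of a nonsingular principal submatrix. Since $C$ is symmetric, this already forces some $4$-set into $\cF$, so $C_{3,4}\le 4$ and $\mldeg\ge 6$.

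If I prefer to stay within the paper's tools, I would instead check the symmetric exchange axiom with $X=[5]\setminus\{i\}$ and $Y=\emptyset$. One can also note that the proof of Theorem~\ref{thm:rank-four-ML-degree} implicitly uses this fact.

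For $\rank(C)=5$, i.e.\ $\det C\neq0$, I would redo the inclusion--exclusion of Section~\ref{sec:EulerStratificationsOfQuadrics} directly. Proposition~\ref{prop:ML-degree-f-polynomial} gives $\mldeg(V_5^c)=\chi(\cV(f_0))+\chi(\cV(f_1))-\chi(\cV(f_0)\cap\cV(f_1))$ with the sign $(-1)^4$, and Lemma~\ref{lemma:Euler-characteristic-f0} gives $\chi(\cV(f_0))=1$. The term $\chi(\cV(f_1))=\mldeg(V_4^{c'})\in\{3,4\}$ comes from Example~\ref{example:SecondHypersimplexOfOrderFour}. By Lemma~\ref{lemma:intersection-rank}, $\cV(f_0)\cap\cV(f_1)$ is a smooth conic in $\CC\PP^2$ with Euler characteristic $2$ in projective space. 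Removing its torus-boundary points gives $\chi=2-k+m$, where $k$ counts intersections with the three coordinate lines and $m$ corrects for points where two coordinates vanish. So $\chi(\cV(f_0)\cap\cV(f_1))\le 2-3=-1$ or so: each coordinate line meets the conic in one or two points, and these counts are governed by vanishing $4\times4$ principal minors of $C$ containing the pivot indices. Hence the sum is at least $1+3+1-\text{(small)}$, and I would verify by casework on which $4\times4$ principal minors vanish that the total is always $\ge 7$.

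The main obstacle is this full-rank case: relating tangency of the conic to coordinate lines to specific principal minors. I would handle it through the Schur complement of Lemma~\ref{lemma:intersection-rank}, noting that restricting to $\theta_i=0$ corresponds to deleting index $i$. I would then use delta-matroid exchange to exclude the impossible vanishing patterns, and finish with a short enumeration.
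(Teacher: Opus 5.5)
Your rank-three and rank-four cases are sound. Invoking the principal-rank property of symmetric matrices is the right way to get $k>0$ when $\rank(C)=4$; the paper only asserts this. Drop the suggested fallback through the exchange axiom, though. The family $\{\emptyset\}\cup\binom{[5]}{2}\cup\binom{[5]}{3}$ is itself a delta-matroid (first entry of Table~\ref{table:number-of-delta-matroids-size-five}), so exchange alone can never force a feasible $4$-set.

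The genuine gap is the full-rank case, in two respects.

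First, the plane $\cV(f_0)\cong\CC\PP^2$ has four boundary lines, not three. Besides $\theta_2,\theta_3,\theta_4=0$ in your Schur-complement coordinates, there is $\theta_1=0$, i.e.\ $c_{02}\theta_2+c_{03}\theta_3+c_{04}\theta_4=0$. Your correction term $m$ is actually $0$: at a point of $\{f_0=0\}$ with $\theta_i=\theta_j=0$, the other two coordinates are nonzero, so $f_1=c_{kl}\theta_k\theta_l\neq0$ there. By Example~\ref{example:SecondHypersimplexOfOrderFour}, the line $\theta_i=0$ meets the smooth conic in two points or one, according to whether the principal $4$-minor of $C$ on the indices other than $i$ is nonzero or zero. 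These are exactly the $4$-minors containing the index $0$; let $k_2$ count the nonzero ones, and let $k_1\in\{0,1\}$ record whether the minor on $\{1,2,3,4\}$ is nonzero. Then $\chi(\cV(f_0)\cap\cV(f_1))=2-(4+k_2)$, and
$\mldeg(V_5^c)=1+(3+k_1)+2+k_2=6+k\ge 6$
with no casework at all; this is the paper's argument. With only three lines and an undetermined $m$, your bound is $\mldeg\ge 5-m$, which does not reach $6$.

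Second, the casework you propose to close the gap aims at $\mldeg\ge 7$ in rank five, and that inequality is false. Rank five with all principal $4$-minors vanishing does occur (Table~\ref{table:number-of-delta-matroids-size-five}, row $[5]\in\cF$, no $4$-sets), and then the ML degree is exactly $6$. For instance, take the circulant matrix with $C_{ij}=\omega$ for $i-j\equiv\pm1$ and $C_{ij}=1$ for $i-j\equiv\pm2 \pmod 5$, where $\omega$ is a primitive cube root of unity. It has $\det C\neq0$. Every principal $4$-minor has the form $x^2+y^2+z^2-2(xy+xz+yz)$ with $(x,y,z)=(\omega^2,1,\omega)$, which equals $(1+\omega+\omega^2)-2(1+\omega+\omega^2)=0$. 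So your rank-five step must be replaced by the exact count $6+k$ above, not by a strict inequality.
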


\begin{proof}
Let $C\in\CC^{5\times 5}$ be a scaling matrix, and let $k$ denote the number of non-vanishing principal $4$-minors of $C$. Write $k=k_1+k_2$ where $k_1=1$ if $\det C[\{2,3,4,5\}]\neq0$ and $k_1=0$ otherwise. We distinguish the following three cases according to the rank of $C$.
\begin{itemize}
\item[1.] If $\rank(C)=3$, then $\mldeg(V_5^c)=6$ by Theorem~\ref{thm:ML-degree-rank-three}.
\item[2.] If $\rank(C)=4$, then Theorem \ref{thm:rank-four-ML-degree} yields $\mldeg(V_5^c) = 5+k$, noting that $C_{3,4} = 5-k$. Since $k>0$ whenever $\textup{rank}(C)=4$, the ML degree is at least six in this case.
\item[3.] If $\textup{rank}(C)=5$, then $\cV(f_0)\cap\cV(f_1)$ is a smooth quadric in $\CC\PP^2$, whose Euler characteristic equals two. Passing to the dense torus requires removing points with zero coordinates. Setting $\theta_i=0$ for $i\in[4]$ yields the quadric corresponding to the second hypersimplex of order four. By Example~\ref{example:SecondHypersimplexOfOrderFour}, its Euler characteristic equals one or two depending on the vanishing of the corresponding principal $4$-minor. Hence, $$\chi(\cV(f_0)\cup\cV(f_1)) = 2 - (2k_2+(4-k_2)) = -2-k_2.$$ Since $\chi(\cV(f_0))=1$ and $\chi(\cV(f_1))= 3+k_1$ by Lemma~\ref{lemma:Euler-characteristic-f0} and Example~\ref{example:SecondHypersimplexOfOrderFour}, we obtain $$\mldeg(V_5^c) = (-1)^4 (1+(3+k_1)-(-2-k_2)) = 6+k_1+k_2 = 6+k.$$
\end{itemize}
Therefore, the minimum ML degree achievable by a suitable scaling is six.
\end{proof}

The proof of Proposition~\ref{prop:stratification-second-hypersimplex-of-order-five} shows that the ML degree of $V_5^c$ is determined by the pattern of vanishing factors of the principal $A$-determinant. We now use delta-matroids to investigate which of these vanishing patterns can actually occur.

\begin{example} \label{ex:delta-matroids-size-five}
A scaling matrix $C\in\CC^{5\times 5}$ has zero diagonal, and non-vanishing principal $2$- and $3$-minors. Therefore, we consider families $\cF\subseteq 2^{[5]}$ that contain the empty set as well as all $2$- and $3$-subsets of $[5]$, but no singleton. Depending on whether $[5]\in\cF$ or $[5]\notin\cF$, we determine the possible combinations of $4$-subsets that may be included so that $([5],\cF)$ forms a delta-matroid. To determine the permissible possibilities in both cases, we performed brute-force computations in \texttt{Macaulay2} \cite{Macaulay2}. The results are shown in Table~\ref{table:number-of-delta-matroids-size-five}, and can also be verified by hand by checking whether the symmetric exchange axiom is satisfied.
\end{example}

\begin{table}[htb] \centering
\begin{tabular}{|| l || C{1cm} C{1cm} C{1cm} C{1cm} C{1cm} C{1cm} ||} 
 \hline
 & \multicolumn{6}{|c|}{Number of $4$-subsets included} \\
 \hline
 & 0 & 1 & 2 & 3 & 4 & 5 \\
 \hline\hline
 $[5]\notin\cF$ & 1 & 0 & 0 & 0 & 5 & 1 \\
 \hline
 $[5]\in\cF$& 1 & 5 & 10 & 10 & 5 & 1\\ 
 \hline
\end{tabular}

\caption{Number of delta-matroids arising from $\cF \subseteq 2^{[5]}$ as considered in Example~\ref{ex:delta-matroids-size-five}.} \label{table:number-of-delta-matroids-size-five}
\end{table}

Table~\ref{table:number-of-delta-matroids-size-five} implies that a rank-four scaling matrix $C\in\CC^{5\times5}$ must contain at least four non-vanishing principal $4$-minors. Referring back to the proof of Proposition~\ref{prop:stratification-second-hypersimplex-of-order-five}, we conclude that the ML degree is at least nine in this case. Moreover, we verified that every delta-matroid listed in
Table~\ref{table:number-of-delta-matroids-size-five}
is indeed representable by a scaling matrix associated with $\Delta_{5,2}$.

\bigskip

We now turn to the second hypersimplex of order six. For generic scalings $c\in(\CC^*)^{15}$, the ML degree of $V_6^c$ equals $26$. We will show that the minimum achievable ML degree is $10$, thereby proving Theorem~\ref{thm:main-minimum} for $d=6$. In contrast to the $d=5$ case, the proof of the minimum ML degree requires a more detailed analysis of the associated delta-matroids. 

Analogously to the case of ground set $[5]$, we determined the cases where no delta-matroid exists via brute-force computations. According to whether $[6]\notin\cF$ or $[6]\in\cF$, the corresponding results are shown in Table~\ref{table:number-of-delta-matroids-size-six-without-ground-set} and Table~\ref{table:number-of-delta-matroids-size-six-with-ground-set}, respectively.

\begin{table}[tbh] \centering 
\begin{tabular}{|| c || C{1cm} C{1cm} C{1cm} C{1cm} C{1cm} C{1cm} C{1cm} ||} 
 \hline
 $[6]\notin\cF$ & \multicolumn{7}{|c|}{Number of $5$-subsets included} \\
 \hline
 Number of $4$-subsets included & 0 & 1 & 2 & 3 & 4 & 5 & 6 \\[0.5ex] 
 \hline\hline
 0 & 1 & 0 & 0 & 0 & 15 & 6 & 1 \\ 
 \hline
 1 & 0 & 0 & 0 & 0 & 90 & 60 & 15 \\ 
 \hline
 2 & 0 & 0 & 0 & 0 & 225 & 270 & 105 \\ 
 \hline
 3 & 0 & 0 & 0 & 0 & 300 & 720 & 455 \\ 
 \hline
 4 & 0 & 0 & 0 & 0 & 255 & 1260 & 1365 \\ 
 \hline
 5 & 0 & 0 & 0 & 0 & 270 & 1518 & 3003 \\ 
 \hline
 6 & 0 & 0 & 0 & 0 & 465 & 1320 & 5005 \\ 
 \hline
 7 & 0 & 0 & 0 & 0 & 600 & 990 & 6435 \\ 
 \hline
 8 & 0 & 0 & 0 & 0 & 465 & 990 & 6435 \\ 
 \hline
 9 & 0 & 0 & 0 & 0 & 270 & 1320 & 5005 \\ 
 \hline
 10 & 6 & 0 & 0 & 0 & 255 & 1518 & 3003 \\ 
 \hline
 11 & 0 & 0 & 0 & 0 & 300 & 1260 & 1365 \\ 
 \hline
 12 & 15 & 0 & 0 & 0 & 225 & 720 & 455 \\ 
 \hline
 13 & 45 & 0 & 0 & 0 & 90 & 270 & 105 \\ 
 \hline
 14 & 15 & 0 & 0 & 0 & 15 & 60 & 15 \\ 
 \hline
 15 & 1 & 0 & 0 & 0 & 0 & 6 & 1 \\ 
 \hline 
\end{tabular}

\caption{Number of delta-matroids arising from $\cF\subseteq2^{[6]}$ where $[6]\notin\cF$ and $\cF$ contains the empty set, all $2$- and $3$-subsets, but no subset of cardinality one.} \label{table:number-of-delta-matroids-size-six-without-ground-set}
\end{table}

\begin{table}[tbh] \centering 
\begin{tabular}{|| c || C{1cm} C{1cm} C{1cm} C{1cm} C{1cm} C{1cm} C{1cm} ||} 
 \hline
 $[6]\in\cF$ & \multicolumn{7}{|c|}{Number of $5$-subsets included} \\
 \hline
 Number of $4$-subsets included & 0 & 1 & 2 & 3 & 4 & 5 & 6 \\[0.5ex] 
 \hline\hline
 0 & 0 & 0 & 0 & 0 & 0 & 0 & 1 \\ 
 \hline
 1 & 0 & 0 & 0 & 0 & 0 & 0 & 15 \\ 
 \hline
 2 & 0 & 0 & 0 & 0 & 0 & 0 & 105 \\ 
 \hline
 3 & 0 & 0 & 0 & 0 & 0 & 0 & 455 \\ 
 \hline
 4 & 0 & 0 & 0 & 0 & 0 & 30 & 1365 \\ 
 \hline
 5 & 0 & 0 & 0 & 0 & 0 & 306 & 3003 \\ 
 \hline
 6 & 0 & 0 & 0 & 0 & 0 & 1410 & 5005 \\ 
 \hline
 7 & 0 & 0 & 0 & 0 & 240 & 3870 & 6435 \\ 
 \hline
 8 & 0 & 0 & 0 & 0 & 1560 & 7020 & 6435 \\ 
 \hline
 9 & 0 & 0 & 0 & 540 & 4335 & 8820 & 5005 \\ 
 \hline
 10 & 0 & 6 & 240 & 2160 & 6690 & 7812 & 3003 \\ 
 \hline
 11 & 0 & 30 & 720 & 3420 & 6225 & 4860 & 1365 \\ 
 \hline
 12 & 0 & 60 & 840 & 2720 & 3540 & 2070 & 455 \\ 
 \hline
 13 & 0 & 60 & 480 & 1140 & 1185 & 570 & 105 \\ 
 \hline
 14 & 0 & 30 & 135 & 240 & 210 & 90 & 15 \\ 
 \hline
 15 & 1 & 6 & 15 & 20 & 15 & 6 & 1 \\ 
 \hline 
\end{tabular}

\caption{Number of delta-matroids arising from $\cF\subseteq2^{[6]}$ with $[6]\in\cF$, where $\cF$ contains the empty set, all $2$- and $3$-subsets, but no subset of cardinality one.} \label{table:number-of-delta-matroids-size-six-with-ground-set}
\end{table}

In the remainder of this section, let $C\in\CC^{6\times6}$ denote the scaling matrix associated with a scaling $c\in(\CC^*)^{15}$. We study the ML degree of $V_6^c$ depending on the rank of $C$.

\begin{notation*}
For a scaling matrix $C\in\CC^{6\times6}$, let $k$ and $l$ denote the number of non-vanishing principal $4$- and $5$-minors, respectively. Furthermore, let $r$ denote the number of principal $5$-submatrices of rank three. 
\end{notation*}

\begin{proposition} \label{prop:ML-degree-order-6-rank-4}
If $\rank(C)=4$, then $\mldeg(V_6^c) = 5+k+r$.
\end{proposition}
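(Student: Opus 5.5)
The plan is to specialize Theorem~\ref{thm:rank-four-ML-degree} to $d=6$ and then express the counts $C_{3,4}$ and $C_{3,5}$ in terms of $k$ and $r$. For $d=6$, the theorem gives
$$\mldeg(V_6^c) = \binom{6}{3} + (-1)^{3} C_{3,4} + (-1)^{4} C_{3,5} = 20 - C_{3,4} + C_{3,5},$$
so the proposition reduces to showing $C_{3,4} = 15-k$ and $C_{3,5} = r$.

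First, I will check that every principal submatrix of $C$ of size at least three has rank at least three. Every principal $3$-submatrix of a scaling matrix has zero diagonal and nonzero off-diagonal entries. Its determinant is therefore $2c_{ij}c_{ik}c_{jk}\neq 0$. Hence each principal $i$-submatrix with $i\ge 3$ contains a nonsingular $3\times 3$ principal submatrix and has rank at least three. Since $\rank(C)=4$, every principal submatrix also has rank at most four.

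Next, I will treat the principal $4$-submatrices. Each has rank three or four, and it has rank four exactly when its determinant is nonzero. Of the $\binom{6}{4}=15$ such submatrices, exactly $k$ have nonvanishing determinant, so the remaining $15-k$ have rank three. This gives $C_{3,4}=15-k$.

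Then I will treat the principal $5$-submatrices. Since $\rank(C)=4$, all principal $5$-minors vanish, so $l=0$. Each $5\times 5$ principal submatrix therefore has rank three or four, and by definition exactly $r$ of them have rank three. This gives $C_{3,5}=r$. Substituting both counts yields $\mldeg(V_6^c)=20-(15-k)+r=5+k+r$.

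I do not expect a genuine obstacle. The only point needing care is the lower bound $\rank \ge 3$ on principal submatrices, which ensures that "vanishing principal $4$-minor" and "rank-three principal $4$-submatrix" coincide. This bound follows directly from the nonvanishing of the principal $3$-minors, which are the monomials $2c_{ij}c_{ik}c_{jk}$.
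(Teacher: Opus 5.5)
Your proposal is correct and follows the paper's proof: you specialize Theorem~\ref{thm:rank-four-ML-degree} to $d=6$ and substitute $C_{3,4}=15-k$ and $C_{3,5}=r$. You also justify these counts explicitly, using the nonvanishing principal $3$-minors $2c_{ij}c_{ik}c_{jk}$ to bound the ranks of principal submatrices; the paper leaves this step implicit.
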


\begin{proof}
This follows from Theorem \ref{thm:rank-four-ML-degree} using the fact that $C_{3,4}=15-k$ and $C_{3,5}=r$.
\end{proof}

We immediately obtain the following corollary.

\begin{corollary}
If $\rank(C)=4$, then the ML degree of $V_6^c$ is at least 16.
\end{corollary}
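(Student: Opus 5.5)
The plan is to show that $k+r\ge 11$ whenever $\rank(C)=4$. Given Proposition~\ref{prop:ML-degree-order-6-rank-4}, this yields $\mldeg(V_6^c)=5+k+r\ge 16$. The argument combines the delta-matroid counts in Tables~\ref{table:number-of-delta-matroids-size-five} and~\ref{table:number-of-delta-matroids-size-six-without-ground-set} with a double-counting argument over the six principal $5$-submatrices.

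\textbf{Step 1: bound $k$ from below.} Since $\rank(C)=4$, every principal minor of size $5$ and $6$ vanishes. So the feasible family $\cF$ of $C$ contains neither $[6]$ nor any $5$-subset, and $l=0$. By Theorem~\ref{thm:symmetric-matrix-delta-matroid}, $([6],\cF)$ is a delta-matroid of the type counted in Table~\ref{table:number-of-delta-matroids-size-six-without-ground-set}. The column for zero included $5$-subsets shows $k\in\{0,10,12,13,14,15\}$. We exclude $k=0$ using the standard fact that a symmetric matrix of rank $\rho$ has a non-singular principal $\rho\times\rho$ submatrix. Hence $k\ge 10$.

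\textbf{Step 2: local constraint on each $5$-subset.} Fix $S\subseteq[6]$ with $|S|=5$. Then $C[S]$ is itself a scaling matrix (zero diagonal, nonzero off-diagonal) with $\det C[S]=0$. Its delta-matroid is therefore counted in the row $[5]\notin\cF$ of Table~\ref{table:number-of-delta-matroids-size-five}. That row shows $C[S]$ has $0$, $4$ or $5$ non-vanishing principal $4$-minors, so $S$ contains $5$, $1$ or $0$ vanishing $4$-minors.

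If all five $4$-minors of $C[S]$ vanish, then $C[S]$ has rank $3$, again by the rank/principal-minor fact: its $3$-minors are nonzero and its larger principal minors vanish. Conversely, if $C[S]$ has rank $3$, all its $4$-minors vanish. Thus $r$ is exactly the number of $5$-subsets containing five vanishing $4$-minors, and every other $5$-subset contains at most one.

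\textbf{Step 3: double counting.} Each $4$-subset lies in exactly two $5$-subsets, so summing the numbers of vanishing $4$-minors over all six $5$-subsets gives $2(15-k)$. Step~2 bounds this sum by $5r+(6-r)$, so
$$2(15-k)\le 6+4r, \qquad\text{i.e.}\qquad r\ge 6-\tfrac{k}{2}.$$
Therefore $k+r\ge 6+\tfrac{k}{2}\ge 11$, using $k\ge10$ from Step~1, and the corollary follows.

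The main obstacle is Step~1. It relies entirely on the brute-force Table~\ref{table:number-of-delta-matroids-size-six-without-ground-set} to rule out $1\le k\le 9$ when no $5$-subset is feasible. Without that table, the double-counting bound alone gives $k+r\ge 6+k/2$, which is too weak for small $k$.
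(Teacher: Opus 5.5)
Your proof is correct. It shares the paper's skeleton: Proposition~\ref{prop:ML-degree-order-6-rank-4}, plus the zero-$5$-subset column of Table~\ref{table:number-of-delta-matroids-size-six-without-ground-set}, gives $k\ge 10$. The difference is in the critical case $k=10$.

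The paper handles $k=10$ by inspecting the six delta-matroids with $k=10$. It reads off that exactly one $5$-subset has all its $4$-subsets infeasible, so $r=1$. You instead prove the general inequality $r\ge 6-k/2$. You get it from the $d=5$ row $[5]\notin\cF$ of Table~\ref{table:number-of-delta-matroids-size-five} (each $5$-subset contains $0$, $1$ or $5$ vanishing $4$-minors) together with double counting. This is more transparent and can be checked by hand. You also make explicit the exclusion of $k=0$ via the principal-rank property of symmetric matrices, which the paper leaves implicit. The cost is that you only get $r\ge1$ rather than $r=1$, but that is all the statement requires.

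Your Step~2 actually removes the ``main obstacle'' you identify. Encode each vanishing $4$-subset $T$ by the edge $[6]\setminus T$ of $K_6$. Then the number of vanishing $4$-minors of $C[[6]\setminus\{\alpha\}]$ is the degree of the vertex $\alpha$, so every degree lies in $\{0,1,5\}$. This leaves three possibilities:
\begin{itemize}
\item Two vertices of degree $5$ force every vertex to have degree $5$, so the graph is $K_6$. Then $k=0$, which is excluded by rank four.
\item Exactly one vertex of degree $5$ forces a star. Then $k=10$ and $r=1$.
\item Otherwise all degrees are at most $1$, so the graph is a matching. Then $k\ge 12$ and $r=0$.
\end{itemize}
This reproduces exactly the zero-$5$-subset column of Table~\ref{table:number-of-delta-matroids-size-six-without-ground-set}, with the same counts $1,6,15,45,15,1$. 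So the corollary, including sharpness at $k=10$, follows from the small $d=5$ table alone, without the $d=6$ enumeration.
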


\begin{proof}
This follows from Proposition~\ref{prop:ML-degree-order-6-rank-4}, together with Table~\ref{table:number-of-delta-matroids-size-six-without-ground-set}. If the number of non-vanishing principal $4$-minors is $k=10$, a direct computation of the six delta-matroids shows that exactly one principal $5$-submatrix has rank three ($r=1$). Otherwise, the scaling matrix has at least twelve non-vanishing principal $4$-minors, so that the ML degree is at least 17. 
\end{proof}

We continue with scaling matrices $C\in\CC^{6\times6}$ of rank five.

\begin{table}[tbh] \centering 
\begin{tabular}{|| c || C{1cm} C{1cm} C{1cm} C{1cm} || c ||} 
 \hline
 Case & $k_1$ & $k_2$ & $l_1$ & $l_2$ & ML degree \\[0.5ex] 
 \hline\hline
 \multicolumn{6}{|c|}{all principal $4$-minors vanish} \\
 \hline\hline
 1 & $=0$ & $=0$ & $\ge0$ & $\ge0$ & $10$ \\ 
 \hline\hline
 \multicolumn{6}{|c|}{all principal $4$-minors do not vanish} \\
 \hline\hline
 2 & $=5$ & $=10$ & $\ge0$ & $\ge0$ & $4+k+l(+r)$ \\
 \hline\hline
 \multicolumn{6}{|c|}{some principal $4$-minors vanish} \\
 \hline\hline
 3.1 & $=0$ & $>0$ & $=0$ & $>0$ & $4+k+l+r$ \\
 \hline
 3.2 & $>0$ & $>0$ & $=0$ & $>0$ & $4+k+l+r$ \\
 \hline
 3.3 & $=0$ & $>0$ & $=1$ & $>0$ & $4+k+l+r$ \\
 \hline
 3.4 & $>0$ & $>0$ & $=1$ & $>0$ & $4+k+l+r$ \\
 \hline
 3.5 & $>0$ & $=0$ & $=0$ & $>0$ & $4+k+l+r$ \\
 \hline
 3.6 & $>0$ & $=0$ & $=1$ & $>0$ & $4+k+l+r$\\
 \hline 
\end{tabular}

\caption{ML degrees of $V_6^c$ for scalings $c\in(\CC^*)^{15}$ with $\rank(C)=5$.} \label{table:ML-degrees-order-6-rank-5}
\end{table}

\begin{proposition} \label{prop:ML-degree-order-6-rank-5}
If $\rank(C)=5$, then $\mldeg(V_6^c)=4+k+l+r$. In particular, the ML degree is at least ten.
\end{proposition}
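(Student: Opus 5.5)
We use the inclusion--exclusion decomposition of Section~\ref{sec:EulerStratificationsOfQuadrics}:
\[
\mldeg(V_6^c)=-\bigl(\chi(\cV(f_0))+\chi(\cV(f_1))-\chi(\cV(f_0)\cap\cV(f_1))\bigr).
\]
By Lemma~\ref{lemma:Euler-characteristic-f0}, $\chi(\cV(f_0))=-1$. Split $k=k_1+k_2$, where $k_1$ counts non-vanishing principal $4$-minors of $C[\{2,\dots,6\}]$ and $k_2$ counts those containing index $1$. Similarly split $l=l_1+l_2$, where $l_1\in\{0,1\}$ records whether $\det C[\{2,\dots,6\}]\neq0$.

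\textbf{The term $\chi(\cV(f_1))$.} Proposition~\ref{prop:ML-degree-f-polynomial} gives $\chi(\cV(f_1))=-\mldeg(V_5^{c'})$, and the proof of Proposition~\ref{prop:stratification-second-hypersimplex-of-order-five} gives the value:
\begin{itemize}
\item if $l_1=1$, it equals $6+k_1$;
\item if the $5\times5$ block has rank four, it equals $5+k_1$;
\item if the block has rank three, it equals $6$, and then $k_1=0$.
\end{itemize}
These can be written uniformly as $5+k_1+l_1+r_1$, where $r_1\in\{0,1\}$ indicates that the block has rank three.

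\textbf{The intersection term.} By Lemma~\ref{lemma:intersection-rank}, $\cV(f_0)\cap\cV(f_1)$ is a rank-three quadric in $\CC\PP^{3}$, i.e.\ a cone over a smooth conic. Its projective Euler characteristic is $1+2=3$. We then remove the coordinate hyperplane sections $\theta_i=0$ for $i\in[5]$ by inclusion--exclusion, exactly as in case~3 of Proposition~\ref{prop:stratification-second-hypersimplex-of-order-five}. Setting some $\theta_i=0$ in the full quadric $f$ restricts to a principal submatrix containing index $1$ (the $\theta_0$ index). So each section $\{\theta_i=0\}\cap\cV(f_0)\cap\cV(f_1)$ is the corresponding intersection quadric for a $5\times5$ principal submatrix containing the first index. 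Its Euler characteristic, before torus removal, is governed by that submatrix's rank ($5$, $4$ or $3$), i.e.\ by $l_2$, the relevant $4$-minors ($k_2$), and whether it has rank three (contributing to $r$). Double intersections reduce to the $d=4$ computation of Example~\ref{example:SecondHypersimplexOfOrderFour}, whose answer ($1$ or $2$ points) is decided by a principal $4$-minor containing index $1$. Triple intersections are finite point sets that are constant in $c$.

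Summing these contributions should give $\chi(\cV(f_0)\cap\cV(f_1))=-(c_0+k_2+l_2+r_2)$ for an absolute constant $c_0$. Fixing $c_0$ against the rank-three case, where the answer must be $10$ with $k=l=0$, then yields $\mldeg=4+k+l+r$.

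\textbf{Main obstacle.} The hard step is the torus-removal bookkeeping for a \emph{singular} quadric cone: when the vertex of the cone lies on a coordinate hyperplane, the sections are no longer the generic ones, and the count shifts. I would handle this by a case split following Table~\ref{table:ML-degrees-order-6-rank-5}, by whether $k_1,k_2$ vanish and whether $l_1\in\{0,1\}$. In each case I would verify the local contributions directly, and use Tables~\ref{table:number-of-delta-matroids-size-six-without-ground-set} and~\ref{table:number-of-delta-matroids-size-six-with-ground-set} (delta-matroid feasibility) to discard vanishing patterns that cannot occur, e.g.\ $l=0$ with $4\le k\le 9$.

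\textbf{Lower bound.} For ``at least ten'': if all $4$-minors vanish, then every $4\times4$ principal submatrix is singular, so by the cited principal rank characteristic result, together with $\rank C=5$, one computes the value $10$ directly (Case~1). Otherwise $k\ge1$, and the delta-matroid tables force $k+l\ge 6$ when $[6]\notin\cF$; for example, $k\le3$ requires $l\ge 4$. This gives $4+k+l+r\ge10$.
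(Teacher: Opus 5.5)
Your overall route matches the paper's: inclusion--exclusion over $\cV(f_0)$, $\cV(f_1)$ and their intersection, torus removal by coordinate sections, and delta-matroid tables to prune patterns. Your uniform value $5+k_1+l_1+r_1$ for $\mldeg(V_5^{c'})$ is also correct.

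\textbf{The intersection term.} The decisive step is only asserted, and in the wrong form. Since $\mldeg(V_6^c)=6+k_1+l_1+r_1+\chi(\cV(f_0)\cap\cV(f_1))$, reaching $4+k+l+r$ requires $\chi(\cV(f_0)\cap\cV(f_1))=-2+k_2+l_2+r_2$. Your $-(c_0+k_2+l_2+r_2)$ has the opposite sign on $k_2,l_2,r_2$, so no constant $c_0$ yields the target.

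The missing computation is short and needs no case split:
\begin{itemize}
\item The projective cone has $\chi=3$.
\item The section $\theta_i=0$ is the intersection quadric of a $5\times5$ principal submatrix containing the first index. By Lemma~\ref{lemma:intersection-rank} it is a plane conic of rank $1,2,3$ when that submatrix has rank $3,4,5$, so its Euler characteristic is $2,3,2$ respectively.
\item The ten double sections contribute $10+k_2$ by Example~\ref{example:SecondHypersimplexOfOrderFour}.
\item Triple sections are empty.
\end{itemize}
Let $m=5-l_2-r_2$ be the number of rank-four sections. Then $\chi(\cV(f_0)\cap\cV(f_1))=3-(10+m)+(10+k_2)=-2+k_2+l_2+r_2$, as required.

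This also dissolves your ``main obstacle'': where the cone vertex sits is already recorded in the ranks of the section conics. Your calibration of $c_0$ against $\rank(C)=3$ is not legitimate as stated. In that case the intersection is a double plane, not a cone over a conic, so a formula derived under $\rank(C)=5$ says nothing about it. It gives the right constant only because both quadrics happen to have $\chi=3$.

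\textbf{The lower bound.} The tables do not force $k+l\ge 6$. Table~\ref{table:number-of-delta-matroids-size-six-without-ground-set} lists $90$ delta-matroids with $[6]\notin\cF$, $k=1$ and $l=4$, where $4+k+l=9$. So you must show $r\ge1$ there, which you never do.

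It is true. Let $K$ be the unique nonsingular $4$-set. The two $5$-sets containing $K$ must be nonsingular, because Table~\ref{table:number-of-delta-matroids-size-five} has no delta-matroid on $[5]$ with $[5]\notin\cF$ and exactly one $4$-subset. Hence the two singular $5$-sets have all principal $4$-minors zero, so they have rank three. This gives $r=2$ and ML degree $11$.

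Every other pattern with $k\ge1$ has $l\ge4$ and $k+l\ge6$. For $k=0$ one has $r=6-l$, so the ML degree is exactly $10$. The paper's case-by-case bound uses the $r$-contribution in exactly this way (e.g.\ Case~3.6).
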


\begin{proof}
Let $C\in\CC^{6\times6}$ be a scaling matrix of rank five. Then $\det C =0$ and there is at least one non-vanishing principal $5$-minor. For $X = \{2,3,4,5,6 \}$, let $k_1$ denote the number of non-vanishing principal $4$-minors of $C[X]$, and set $k_2\coloneqq k-k_1$. Similarly, let $l_2\coloneqq l-l_1$ and $r_2\coloneqq r-r_1$, where $$l_1 \coloneqq \begin{cases} 1, & \textup{if } \det C[X]\neq0, \\ 0, & \textup{if } \det C[X]=0,\end{cases} \qquad r_1 \coloneqq \begin{cases} 1, & \textup{if } \rank(C[X]) = 3, \\ 0, & \textup{otherwise.} \end{cases}$$

The ML degree depends on the pattern of vanishing principal $4$- and $5$-minors as well as on the number of principal $5$-submatrices of rank three. All possible cases are listed in Table~\ref{table:ML-degrees-order-6-rank-5}. We exclude the cases where $l_1=l_2=0$, since this contradicts $\rank(C)=5$, and the cases where $l_1=1$ and $l_2=0$, since these are impossible by Table \ref{table:number-of-delta-matroids-size-six-without-ground-set}. The ML degrees in Table~\ref{table:ML-degrees-order-6-rank-5} were determined by the procedure described in Section~\ref{sec:EulerStratificationsOfQuadrics}. 

We illustrates this procedure for Case 3.6 in Table~\ref{table:ML-degrees-order-6-rank-5}. Since $l_1=1$, we have $\rank C[X]=5$, and therefore $\chi(\cV(f_1)) = - (6+k_1)$ by the proof of Proposition~\ref{prop:stratification-second-hypersimplex-of-order-five}. Furthermore, by Lemma~\ref{lemma:Euler-characteristic-f0}, $\chi(\cV(f_0)) = (-1)^5$. The intersection $\cV(f_0)\cap\cV(f_1)$ corresponds to a rank-three quadric in $\CC\PP^3$, which has Euler characteristic three. Passing to the dense torus yields $$\chi(\cV(f_0)\cap\cV(f_1)) = 3 - 5\cdot 2 + 10 = 3.$$ The correction terms arise as follows. There are $10$ vanishing principal $4$-minors, each contributing Euler characteristic one, and there are five vanishing principal $5$-minors, $l_2$ of them correspond to quadrics of rank $3$ in $\CC\PP^2$, each contributing Euler characteristic two; the remaining $(5-l_2)$ are quadrics of rank one, also contributing Euler characteristic two. Hence, $$\mldeg(V_A^c) = -[(-1)^5-(6+k_1)-3] = 10+k_1.$$ Since in this case $k=k_1$ and $r=5-l_2=5-(l-l_1) = 5-(l-1)=6-l$, it follows that $$\mldeg(V_A^c) = 10+k = 10+k+r-(6-l) = 4+k+l+r.$$ 

For the first case in Table~\ref{table:ML-degrees-order-6-rank-5}, where $k=0$, we note that there are $r=6-l$ principal $5$-submatrices of rank three. Hence, $$ 10 = 4+l+(6-l) = 4+l+r = 4+k+l+r.$$

It remains to prove the lower bound. By Table~\ref{table:number-of-delta-matroids-size-six-without-ground-set}, there exist no delta-matroids for $l\in[3]$. Since $\rank(C)=5$, we therefore have $l\ge 4$.

Now consider, for example, Case 3.6. We have $k\ge1$, since $k_1>0$. Moreover, if $l_2=4$, then there is a principal $5$-submatrix of rank three, since $k_2=0$, which implies $r\ge1$. If $l_2=5$, then necessarily $l\ge5$. In either situation, we obtain $4+k+l+r\ge10$. 

The remaining cases can be treated analogously. 
\end{proof}

\begin{table}[tbh] \centering 
\begin{tabular}{|| c || C{1cm} C{1cm} C{1cm} C{1cm} || c ||} 
 \hline
 Case & $k_1$ & $k_2$ & $l_1$ & $l_2$ & ML degree \\[0.5ex] 
 \hline\hline
 \multicolumn{6}{|c|}{all principal $5$-minors vanish} \\
 \hline\hline
 1 & $=5$ & $=10$ & $=0$ & $=0$ & $20$ \\ 
 \hline\hline
 \multicolumn{6}{|c|}{all principal $5$-minors do not vanish} \\
 \hline\hline
 2.1 & $=0$ & $=0$ & $=1$ & $=5$ & $11$ \\
 \hline
 2.2 & $=5$ & $=10$ & $=1$ & $=5$ & $26$ \\
 \hline
 2.3 & $\ge0$ & $\ge0$ & $=1$ & $=5$ & $11+k$ \\
 \hline\hline
 \multicolumn{6}{|c|}{some principal $5$-minors vanish} \\
 \hline\hline
 \multicolumn{6}{|c|}{all principal $4$-minors do not vanish} \\
 \hline\hline
 3.1.1 & $=5$ & $=10$ & $=0$ & $>0$ & $20+l$ \\
 \hline
 3.1.2 & $=5$ & $=10$ & $=1$ & $=0$ & $20+l$ \\
 \hline
 3.1.3 & $=5$ & $=10$ & $=1$ & $>0$ & $20+l$ \\
 \hline\hline
 \multicolumn{6}{|c|}{some principal $4$-minors do not vanish} \\
 \hline\hline
 3.2.1 & $=0$ & $>0$ & $=0$ & $>0$ & $5+k+l+r$ \\
 \hline 
 3.2.2 & $>0$ & $>0$ & $=0$ & $>0$ & $5+k+l+r$ \\
 \hline 
 3.2.3 & $=0$ & $>0$ & $=1$ & $>0$ & $5+k+l+r$ \\
 \hline 
 3.2.4 & $>0$ & $>0$ & $=1$ & $>0$ & $5+k+l+r$ \\
 \hline 
 3.2.5 & $>0$ & $=0$ & $=0$ & $>0$ & $5+k+l+r$ \\
 \hline 
 3.2.6 & $>0$ & $=0$ & $=1$ & $>0$ & $5+k+l+r$ \\
 \hline 
 3.2.7 & $=0$ & $>0$ & $=1$ & $=0$ & $5+k+l+r$ \\
 \hline 
 3.2.8 & $>0$ & $>0$ & $=1$ & $=0$ & $5+k+l+r$ \\
 \hline 
\end{tabular}

\caption{ML degrees of $V_6^c$ for scalings $c\in(\CC^*)^{15}$ with $\rank(C)=6$.} \label{table:ML-degrees-order-6-rank-6}
\end{table}

It remains to study scalings $c\in(\CC^*)^{15}$ whose associated scaling matrix has rank six.

\begin{proposition}
If $\rank(C)=6$, then $\mldeg(V_6^c)=5+k+l+r$. In particular, the ML degree is at least eleven.
\end{proposition}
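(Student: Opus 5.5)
We follow the same scheme as in the proof of Proposition~\ref{prop:ML-degree-order-6-rank-5}. Put $X=\{2,\ldots,6\}$ and split $k=k_1+k_2$, $l=l_1+l_2$, $r=r_1+r_2$ as there. Then
$$\mldeg(V_6^c)=-\bigl(\chi(\cV(f_0))+\chi(\cV(f_1))-\chi(\cV(f_0)\cap\cV(f_1))\bigr),$$
and the three contributions are computed as follows.

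\textbf{The three terms.}
\begin{itemize}
\item By Lemma~\ref{lemma:Euler-characteristic-f0}, $\chi(\cV(f_0))=-1$.
\item The term $\chi(\cV(f_1))$ is $-\mldeg(V_5^{c'})$ for the scaling $c'$ given by $C[X]$, via Proposition~\ref{prop:ML-degree-f-polynomial}. The proof of Proposition~\ref{prop:stratification-second-hypersimplex-of-order-five} gives this value: $6$ if $\rank C[X]=3$, $5+k_1$ if the rank is four, and $6+k_1$ if the rank is five. In terms of $l_1$ and $r_1$ this is the single formula $5+k_1+l_1+r_1$. When $\rank C[X]=3$ we have $k_1=0$, so the value is $5+0+0+1=6$, as required.
\item Since $\rank(C)=6$, Lemma~\ref{lemma:intersection-rank} says $\cV(f_0)\cap\cV(f_1)$ is a smooth quadric surface in $\CC\PP^3$, with Euler characteristic $4$.
\end{itemize}

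\textbf{Passing to the torus.} We remove the coordinate sections $\theta_i=0$ by inclusion--exclusion. A section with $\theta_i=0$, $i\in[5]$, is the analogous intersection for the principal $5$-submatrix $C[[6]\setminus\{i+1\}]$ (the row $0$ is always kept). Its torus Euler characteristic is already known from the order-five computation: it is $-2-k'_2$ in the rank-five case, and the analogous values in the rank-four and rank-three cases follow from Lemma~\ref{lemma:intersection-rank} and Theorem~\ref{thm:rank-four-ML-degree}. Deeper intersections reduce to the order-four quadrics of Example~\ref{example:SecondHypersimplexOfOrderFour}, contributing $1$ or $2$ according to whether the corresponding $4$-minor vanishes, and to points.

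Summing, the total should collapse to $-(4+k_2+l_2+r_2)$ up to constants. The main point is that a $4$-subset containing $1$ and $2$ is counted once, through the sections, and each such subset lies in exactly one of the relevant sections. Combining the three terms should then give
$$\mldeg(V_6^c)=1+(5+k_1+l_1+r_1)-\chi\bigl(\cV(f_0)\cap\cV(f_1)\bigr)=5+k+l+r.$$

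As a check, the generic case $k=15$, $l=6$, $r=0$ gives $26$. The table entries are consistent: Case 1 gives $5+15+0+0=20$ and Case 2.1 gives $5+0+6+0=11$. The table would be organized by the cases of Table~\ref{table:ML-degrees-order-6-rank-6}, and one case, say 3.2.6, would be worked out explicitly, as was done for Case 3.6 before.

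\textbf{Lower bound.} Since $\det C\ne0$, we look at Table~\ref{table:number-of-delta-matroids-size-six-with-ground-set}.
\begin{itemize}
\item If $l=0$, the only possibility is $k=15$, giving $20$.
\item If $l\ge1$, the table forces $k\ge10$, hence a value of at least $16$, except in the row $l=6$. There $k$ is arbitrary, but $5+k+6\ge11$.
\end{itemize}
So the minimum is eleven, attained at $k=0$, $l=6$.

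\textbf{Main obstacle.} The delicate step is the bookkeeping when passing to the torus. The sections $\theta_i=0$ may be quadrics of rank $3$, $4$ or $5$, and each rank-three principal $5$-submatrix must contribute exactly one unit, giving the $r$ term. I would first verify the identity in the all-minors-nonvanishing case and in the rank-three-subsection case, and then argue additivity case by case along the table.
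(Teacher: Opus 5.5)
\textbf{There is a genuine gap at the one nontrivial step.} Your route is the paper's: the three-term split from Proposition~\ref{prop:ML-degree-f-polynomial}, the order-five values for $\chi(\cV(f_1))$, Lemma~\ref{lemma:intersection-rank} for the intersection, the torus correction, and Table~\ref{table:number-of-delta-matroids-size-six-with-ground-set} for the bound. Your packaging of the order-five values as $5+k_1+l_1+r_1$ is correct. But the torus Euler characteristic of $\cV(f_0)\cap\cV(f_1)$, which carries all the content, is only asserted, and what you assert is wrong. Proposition~\ref{prop:ML-degree-f-polynomial} gives
\[
\mldeg(V_6^c)=-\chi(\cV(f_0))-\chi(\cV(f_1))+\chi(\cV(f_0)\cap\cV(f_1))=1+\mldeg(V_5^{c'})+\chi(\cV(f_0)\cap\cV(f_1)),
\]
with a plus sign, not the minus in your last display. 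Your predicted shape ``$-(4+k_2+l_2+r_2)$ up to constants'' also has the wrong sign. Generically the torus part is a smooth quadric surface minus five smooth conics that meet pairwise in two points and have empty triple intersections, so its Euler characteristic is $4-10+20=14>0$. The two sign errors cancel, so your final formula is right. However, your checks only compare that formula with the table, and nothing in the proposal derives it.

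\textbf{The missing computation.} Deleting one row and one column lowers the rank by at most two. So every principal $5$-submatrix of an invertible $C$ has rank $4$ or $5$. Hence $r=r_1=r_2=0$, the rank-three branch never occurs, and your ``main obstacle'' is vacuous.

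Let $Q$ be the smooth quadric surface and $Q_i=Q\cap\{\theta_i=0\}$ for $i\in[5]$, taken closed.
\begin{enumerate}
\item By Lemma~\ref{lemma:intersection-rank} applied to $C[[6]\setminus\{i+1\}]$, each $Q_i$ is a plane conic of rank $3$ or $2$. So $\chi(Q_i)=2$ or $3$ according as that $5$-minor is non-zero or zero.
\item Each $Q_i\cap Q_j$ is the point pair of Example~\ref{example:SecondHypersimplexOfOrderFour} for the $4$-subset $[6]\setminus\{i+1,j+1\}\ni 1$. So $\chi=2$ or $1$ according to its $4$-minor.
\item Triple intersections are empty.
\end{enumerate}
Inclusion--exclusion gives
\[
\chi(\cV(f_0)\cap\cV(f_1))=4-(15-l_2)+(10+k_2)=k_2+l_2-1,
\]
and therefore $\mldeg(V_6^c)=1+(5+k_1+l_1)+(k_2+l_2-1)=5+k+l=5+k+l+r$.

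You might instead plug in the order-five torus values such as $-2-k_2'$ for the single sections. In that case you must use the disjoint stratification by support and not also subtract the double intersections, because each $4$-subset containing $1$ lies in two of the sections. Done either way, this uniform count replaces the paper's case-by-case table and is cleaner.

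\textbf{The lower bound.} Your argument misreads the table: its rows index $k$ and its columns index $l$. It does not force $k\ge10$ for $1\le l\le5$; for instance $(k,l)=(9,3),(7,4),(4,5)$ all occur. The smallest admissible $k$ in the columns $l=0,\dots,6$ are $15,10,10,9,7,4,0$. So $5+k+l\ge14$ unless $l=6$, where $5+k+l\ge11$. The conclusion survives, but not for the reason you give.
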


\begin{proof}
Let $C\in\CC^{6\times6}$ be a scaling matrix of rank six, that is, $\det(C)\neq 0$. We consider the integers $k_i$, $l_i$, and $r_i$ for $i\in[2]$ as introduced in the proof of Proposition \ref{prop:ML-degree-order-6-rank-5}. 

Again, we analyze all possible patterns of vanishing principal $4$- and $5$-minors of $C$. The corresponding ML degrees are shown in Table~\ref{table:ML-degrees-order-6-rank-6}. Observe that the case where all principal $5$-minors vanish can only occur if all principal $4$-minors do not vanish, as follows from Table~\ref{table:number-of-delta-matroids-size-six-with-ground-set}.

The computation of the ML degrees, as well as the proof of the lower bound, were carried out analogously to the procedure in the proof of Proposition~\ref{prop:ML-degree-order-6-rank-5}. A case-by-case study shows that each ML degree listed in Table~\ref{table:ML-degrees-order-6-rank-6} can be written in the desired form $5+k+l+r$.
\end{proof}

This completes the proof of Theorem~\ref{thm:main-minimum}. We now deduce the delta-matroid invariance, thereby proving Theorem~\ref{thm:main-delta-matroid-invariance}. Recall that two delta-matroids are isomorphic if there exists a bijection between their ground sets that induces a bijection between their feasible sets. 

\begin{theorem} \label{thm:delta-matroid-invariance}
Let $2\le d\le6$. For $i\in[2]$, let $C_i\in\CC^{5\times5}$ be scaling matrices with associated scalings $c_i$. If the delta-matroids arising from $C_1$ and $C_2$ are isomorphic, then $$\mldeg(V_d^{c_1}) = \mldeg(V_d^{c_2}).$$
\end{theorem}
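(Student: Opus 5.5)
The plan is to show that, for each $d\le 6$, the ML degree of $V_d^c$ is a function of the delta-matroid $([d],\cF)$ attached to $C$ alone. The argument then rests on the fact that the formulas derived above are invariant under relabelling the ground set. (The statement writes $C_i\in\CC^{5\times5}$, but we read it as $C_i\in\CC^{d\times d}$ for the given $d$.)

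\textbf{Small cases.} For $d=2,3$ there is nothing to prove: the principal $A$-determinant is trivial, so the ML degree equals the degree $2^{d-1}-d$ for every scaling.

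For $d=4$, Example~\ref{example:SecondHypersimplexOfOrderFour} gives $\mldeg(V_4^c)=4$ if $[4]\in\cF$ and $3$ otherwise. This is determined by $\cF$.

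For $d=5$, the proof of Proposition~\ref{prop:stratification-second-hypersimplex-of-order-five} gives three formulas: $6$ in rank three, $5+k$ in rank four, and $6+k$ in rank five, where $k$ is the number of $4$-subsets in $\cF$.

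\textbf{Reading the rank from $\cF$.} The key observation, used repeatedly, is that the rank of a symmetric matrix equals the largest size of a non-singular principal submatrix. This is the principal rank property of symmetric matrices, the same input as \cite[Theorem~1.1]{ThePrincipalRankCharacteristicSequenceOverVariousFields}. So $\rank(C)=\max\{|X| : X\in\cF\}$ is read off from $\cF$, and the case distinction in each proposition is a delta-matroid invariant.

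Applying the same fact to principal submatrices $C[Y]$: $\rank C[Y]$ is the largest $|X|$ with $X\subseteq Y$ and $X\in\cF$. Hence the counts $C_{3,i}$ in Theorem~\ref{thm:rank-four-ML-degree} are also determined by $\cF$. In particular, for $d=6$ the number $r$ of rank-three principal $5$-submatrices is the number of $5$-sets $Y\notin\cF$ containing no $4$-set of $\cF$. (Every $3$-set lies in $\cF$, so the rank of $C[Y]$ is at least three.)

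\textbf{The case $d=6$.} The propositions for ranks four, five and six express $\mldeg(V_6^c)$ as $5+k+r$, $4+k+l+r$ and $5+k+l+r$ respectively. Here $k$ and $l$ are the numbers of $4$- and $5$-sets in $\cF$, and $r$ is as above. Each quantity is a function of $\cF$, and rank three gives $10$ by Theorem~\ref{thm:ML-degree-rank-three}.

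\textbf{Isomorphism invariance.} If $\sigma:[d]\to[d]$ is a bijection carrying $\cF_1$ to $\cF_2$, then $\sigma$ preserves cardinalities and inclusions. Therefore it preserves the maximal feasible size (the rank), the numbers $k$ and $l$, and the sets $Y\notin\cF$ with no feasible $4$-subset (so also $r$ and the $C_{3,i}$). Substituting into the formulas gives equal ML degrees.

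\textbf{Main obstacle.} The delicate step is the principal rank fact: that $\rank C[Y]$ equals the maximal size of a non-singular principal minor inside $Y$, even with zero diagonal. I would prove it directly. Choose a maximal $X\subseteq Y$ with $\det C[X]\ne0$, and pass to the principal pivot transform $C\star X$. Its Schur complement block must vanish on $Y\setminus X$: by Lemma~\ref{lemma:transform-determinant}, maximality forces the diagonal entries there to be zero, and the $2\times2$ argument from the proof of Theorem~\ref{thm:symmetric-matrix-delta-matroid} forces the off-diagonal entries to be zero as well. The Guttman rank additivity used in Lemma~\ref{lemma:intersection-rank} then gives $\rank C[Y]=|X|$.

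One must also check that the case tables in the rank-five and rank-six propositions list every pattern, so that the uniform formulas hold throughout. This is already asserted in those proofs, so it may be cited.
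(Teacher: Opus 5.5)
Your proposal is correct and follows essentially the same route as the paper: for each $d\le 6$, the ML-degree formulas established earlier depend only on the vanishing pattern of principal minors, hence only on $\cF$ up to relabelling of the ground set. You also make explicit, via the principal rank property, that the rank of $C$ and the counts $r$ and $C_{3,i}$ appearing in those formulas are determined by $\cF$, which the paper uses only tacitly; your pivot/Schur-complement proof of that property is sound, and the one slip is harmless (for $d=2$ the value $2^{d-1}-d=0$ is not the degree of the point $V_2^c$, but only independence of the scaling is needed there).
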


\begin{proof}
For $d=2$ and $d=3$, the statement is immediate. The case $d=4$ follows directly from Example~\ref{example:SecondHypersimplexOfOrderFour}. For $d=5$ and $d=6$, the preceding propositions show that the ML degree depends only on the vanishing pattern of the principal minors of the associated scaling matrix. Since isomorphic delta-matroids correspond to the same pattern of vanishing principal minors up to permutation of the ground set, the ML degrees coincide.
\end{proof}

\section{Discussion}
\label{sec:Discussion}

In this article, we proved \cite[Conjecture~3.9]{MatroidStratificationOfMLDegreesOfIndependenceModels} for $d\le6$. Our reasoning relies on explicit computations of the Euler characteristic, which in turn depend on the rank of the associated quadric. For second hypersimplices of higher order, however, determining explicit formulas for the ML degree becomes increasingly challenging using our current approach. 

The main problem remains to determine the minimum ML degree of $V_d^c$ for $d\ge 7$. Our results suggest that this problem is controlled by the underlying delta-matroids. However, it is poorly understood which subsets of $2^{[d]}$ actually form a delta-matroid. In this article, we relied on direct computations to determine the relevant delta-matroids. Unfortunately, the computational complexity grows rapidly with the dimension. Therefore, using our current brute-force implementation in \texttt{Macaulay2}, this enumeration seems infeasible for $d\ge7$.

Nevertheless, for rank-four cases for $d=7$ we excluded the existence of delta-matroids with few 4-subsets by computing orbit representatives, up to eight included $4$-subsets. Likewise, for ground sets $[4]$, $[5]$, and $[6]$, our computations indicate that at least $1$, $4$, and $10$ subsets of size four, respectively, must occur. We therefore state the following conjecture.

\begin{conjecture}
Let $C\in\CC^{d\times d}$ be a scaling matrix of rank four. The minimum number of non-vanishing principal $4$-minors is $$\binom{d-1}{3}.$$ In particular, the corresponding delta-matroid lies in the orbit that consists of families $$\cF_\alpha \coloneqq \{\emptyset\} \cup \binom{[d]}{2} \cup \binom{[d]}{3} \cup \cK_\alpha, \quad \alpha\in[d], \quad \textup{where } \; \cK_\alpha \coloneqq \left\{ K\in\binom{[d]}{4} \mid \alpha\in K \right\}.$$
\end{conjecture}

If the ML degree were known to be a delta-matroid invariant in general, the computational analysis could be reduced to a classification of delta-matroid orbits under the natural $S_d$-action. This perspective further suggests two additional directions for future research.

\bigskip

\noindent\textbf{Representability of Delta-matroids.} Little is known about the representability of delta-matroids by symmetric matrices via their non-vanishing principal minors. While all delta-matroids appearing in Table~\ref{table:number-of-delta-matroids-size-five} admit such a realization, it remains open whether every delta-matroid on ground set $[6]$ is representable. 
Consequently, it is unknown whether every ML degree arising from these vanishing patterns is actually realizable.

\bigskip

\noindent\textbf{ML Degree Monotonicity.} Recent work deals with the monotonicity of ML degrees across various settings. For instance, in Gaussian graphical models, the ML degree behaves monotonically on induced subgraphs \cite[Corollary 2.2]{OnTheMaximumLikelihoodDegreeForGaussianGraphicalModels}. A similar principle holds for scaled toric varieties where the ML degree is monotonic on the face poset \cite[Theorem~6]{TheMaximumLikelihoodDegreeOfToricModelsIsMonotonic}. Furthermore, in the context of two-way independence model, \cite[Conjecture~2.9]{MatroidStratificationOfMLDegreesOfIndependenceModels} suggests that the ML degrees respect the weak order on certain matroids. 
We observe an analogous behavior for delta-matroids arising from second hypersimplices.

\begin{example}
According to Table~\ref{table:number-of-delta-matroids-size-five}, there are 39 delta-matroids arising from scaling matrices of size five. Our computational analysis shows that these are grouped into nine orbits. We partially order these orbits by inclusion. The resulting Hasse diagram is shown in Figure~\ref{figure:orbits-poset-size-five}.

For ground set $[6]$, Table~\ref{table:number-of-delta-matroids-size-six-without-ground-set} and \ref{table:number-of-delta-matroids-size-six-with-ground-set} show that there are 155\,474 delta-matroids, grouped into 687 orbits. Table \ref{table:number-orbits-ML-degree-second-hypersimplex-order-six} shows the number of orbits corresponding to each possible ML degree of $V_6^c$. Using \texttt{Macaulay2}, we verified that the ML degree is monotonic on the associated~poset.

The computations of the orbits were carried out using the \texttt{Julia} package \texttt{GAP.jl}.

\begin{table}[tbh] \centering 
\begin{tabular}{|| c || C{0.9cm} C{0.9cm} C{0.9cm} C{0.9cm} C{0.9cm} C{0.9cm} C{0.9cm} C{0.9cm} C{0.9cm} ||} 
\hline
ML degree & 10 & 11 & 12 & 13 & 14 & 15 & 16 & 17 & 18 \\
\hline
Number of orbits & 4 & 4 & 7 & 15 & 25 & 37 & 59 & 82 & 109 \\[0.5ex] 
\hline \hline

ML degree & 19 & 20 & 21 & 22 & 23 & 24 & 25 & 26 & \\
\hline
Number of orbits & 116 & 102 & 66 & 35 & 15 & 7 & 3 & 1 & \\
\hline
\end{tabular}

\caption{Number of orbits for possible ML degrees of the second hypersimplex of order~six.} \label{table:number-orbits-ML-degree-second-hypersimplex-order-six}
\end{table}

\begin{figure}[tbh]
\centering
\includegraphics[width=0.3\textwidth]{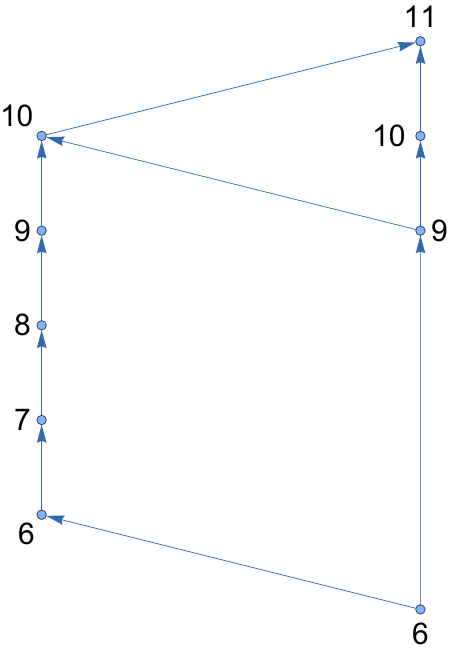}

\caption{Poset of delta-matroid orbits arising from scaling matrices of size five.} \label{figure:orbits-poset-size-five}
\end{figure}
\end{example}

Motivated by these computational results, we conjecture that the ML degree of second hypersimplices is monotonic on the poset of delta-matroid orbits ordered by inclusion.

\begin{acknowledgements*} \upshape
I thank Carlos Am\'endola and Serkan Ho\c{s}ten for many helpful conversations and valuable comments on earlier versions of the manuscript. I am especially grateful to Serkan for pointing out the connection to delta-matroids and \cite[Theorem~2.2]{EulerDiscriminantOfComplementsOfHyperplanes}.
\end{acknowledgements*}

\setlength{\bibsep}{0.1\baselineskip}
\bibliographystyle{alpha}
\bibliography{bib.bib}

\bigskip

\textsc{Technische Universität Berlin, Institute of Mathematics}

\textit{Email address:} {\tt oldekop@math.tu-berlin.de}

\end{document}